\theoremstyle{plain}
\newtheorem{thm}{Theorem}[section]
\newtheorem{lem}[thm]{Lemma}
\newtheorem{prop}[thm]{Proposition}
\newtheorem{cor}[thm]{Corollary}
\theoremstyle{definition}
\newtheorem{defn}[thm]{Definition}
\newtheorem{rem}[thm]{Remark}
\newtheorem{ex}[thm]{Example}
\newcommand{\R}{\mathbb{R}}
\newcommand{\Z}{\mathbb{Z}}
\newcommand{\K}{\mathcal{K}}
\newcommand{\abs}[1]{\left\lvert {#1} \right\rvert}
\newcommand{\ang}[1]{\left\langle {#1} \right\rangle}
\newcommand{\Conf}{\mathrm{Conf}}
\newcommand{\DR}{\mathit{DR}}
\newcommand{\FH}{\mathit{FH}}
\newcommand{\fK}{\widetilde{\K}}
\newcommand{\id}{\mathrm{id}}
\newcommand{\lk}{\mathrm{lk}} 
\newcommand{\pr}{\mathrm{pr}}
\newcommand{\SO}{\mathrm{SO}}
\newcommand{\supp}{\mathrm{supp}} 
\newcommand{\wtf}{\widetilde{f}} 
\newcommand{\vf}{\mathrm{v}_\mathrm{f}}
\newcommand{\vi}{\mathrm{v}_\mathrm{i}}
\newcommand{\vol}{\mathrm{vol}}
\numberwithin{equation}{section}
\numberwithin{figure}{section}
\title{The Fox-Hatcher cycle and a Vassiliev invariant of order three}
\author{Saki Kanou and Keiichi Sakai}
\thanks{The 2nd author is partially supported by JSPS KAKENHI Grant Number 20K03608.}
\address{Faculty of Mathematics, Shinshu University, 3-1-1 Asahi, Matsumoto, Nagano 390-8621, Japan}
\email{20ss104f@gmail.com}
\email{ksakai@math.shinshu-u.ac.jp}
\date{\today}
\begin{document}
\begin{abstract}
We show that the integration of a 1-cocycle $I(X)$ of the space of long knots in $\R^3$ over the \emph{Fox-Hatcher 1-cycles} gives rise to a Vassiliev invariant of order exactly three.
This result can be seen as a continuation of the previous work of the second named author \cite{K09}, proving that the integration of $I(X)$ over the \emph{Gramain 1-cycles} is the \emph{Casson invariant}, the unique nontrivial Vassiliev invariant of order two (up to scalar multiplications).
The result in the present paper is also analogous to part of Mortier's result \cite{Mortier15}.
Our result differs from, but is motivated by, Mortier's one in that the 1-cocycle $I(X)$ is given by the \emph{configuration space integrals} associated with graphs while Mortier's cocycle is obtained in a combinatorial way.
\end{abstract}
\maketitle

\section{Introduction}
The spaces of smooth embeddings of manifolds are receiving a lot of attention in topology, on the ground that various important methods in algebraic and geometric topology are being applied to the spaces.
In this paper we study the space of \emph{(framed) long knots} in $\R^3$.

\begin{defn}
A \emph{long knot} is an embedding $f\colon\R^1\hookrightarrow\R^3$ satisfying $f(x)=(x,0,0)$ for any $x\in\R^1$ with $\abs{x}\ge 1$. A \emph{framed long knot} is a smooth map $\wtf=(f,w)\colon\R^1\to\R^3\times\SO(3)$ such that $f$ is a long knot, the first column of $w(x)\in\SO(3)$ is equal to $f'(x)/\abs{f'(x)}$ and $w(x)$ is the identity matrix for any $x\in\R^1$ with $\abs{x}\ge 1$.
The space of all long knots (respectively framed long knots) is denoted by $\K$ (respectively $\fK$).
\end{defn}

The recent studies of $\K$ (and its high dimensional analogues) are revealing relations between the topological nature of $\K$ and the \emph{Vassiliev invariants} (see for example \cite{JacksonMoffatt19}) for knots and links.
In \cite{K09} the second named author has constructed a de Rham 1-cocycle $I(X)$ of $\K$ (see \S\ref{s:cocycle}), by means of the integrations over configuration spaces associated with graphs, and has shown that the integration of $I(X)$ over the \emph{Gramain cycles} of $\K$ gives rise to the \emph{Casson invariant} $v_2$, the Vassiliev invariant of order two uniquely characterized by $v_2(\text{trivial knot})=0$ and $v_2(\text{trefoil knot})=1$.
This may be seen as a real valued version of \cite[Theorem 2]{Turchin06}.
After that Mortier has given another 1-cocyle $\alpha^1_3$ of $\K$ in a combinatorial way and has shown that its evaluations over the Gramain cycles and the \emph{Fox-Hatcher cycles} $\FH$ are Vassiliev invariants of orders respectively two and three \cite[Theorem~4.1]{Mortier15}.
In \cite{Fiedler20} 1-cocycles on $\K$ are also studied in detail from a combinatorial viewpoint.

The main result in the present paper is analogous to the order three part of Mortier's result.

\begin{thm}\label{thm:main}
The integration of $I(X)$ over the Fox-Hatcher cycles gives rise to a Vassiliev invariant of order three for framed long knots.
More precisely we have
\begin{equation}\label{eq:invariant_explicit_form}
 \int_{p_*\FH_{\widetilde{f}}}I(X) = 6v_3(f)-\lk(\widetilde{f})v_2(f),
\end{equation}
where
\begin{itemize}
\item
	$p\colon\fK\to\K$ is the first projection and $f=p(\wtf)$,
\item
	$v_2$ is the Casson invariant, and $v_3$ is the Vassiliev invariant of order three characterized by the conditions
	\begin{equation}\label{eq:v_3_characterization}
	 v_3(\text{trivial knot})=0,\quad
	 v_3(3_1^+)=1,\quad
	 v_3(3_1^-)=-1
	\end{equation}
	($3_1^+$ and $3_1^-$ are respectively the right-handed and the left-handed trefoil knots),
	and
\item
	$\lk(\widetilde{f})\in\Z$ is the \emph{framing number} of $\widetilde{f}$ (see Remark~\ref{rem:framing_number} below).
\end{itemize}
\end{thm}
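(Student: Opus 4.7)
The plan is to prove Theorem~\ref{thm:main} in three stages: well-definedness, determining the Vassiliev order, and identifying the explicit linear combination.

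First, because $I(X)$ is a de Rham 1-cocycle on $\K$ (from \cite{K09}), the integral $\int_{p_*\FH_{\widetilde{f}}} I(X)$ depends only on the homology class of $p_*\FH_{\widetilde{f}}$ in $H_1(\K)$. The Fox-Hatcher construction is natural under framed isotopy, so this class depends only on the component of $\widetilde{f}$ in $\fK$, and the assignment $v\colon\pi_0(\fK)\to\R$, $\widetilde{f}\mapsto \int_{p_*\FH_{\widetilde{f}}} I(X)$, is well-defined.

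Next, I would show that $v$ is a Vassiliev invariant of order at most three. The idea is to take a generic 1-parameter family $\widetilde{f}_t$ in $\fK$ crossing the discriminant transversely at one double point, and describe the jump $v(\widetilde{f}_+)-v(\widetilde{f}_-)$ in terms of the singular knot $\widetilde{f}_0$. This amounts to a codimension-one analysis of the configuration space integrals defining $I(X)$ along the $2$-parameter family $\FH_{\widetilde{f}_t}$, in the same spirit as the analogous computation for the Gramain cycle in \cite{K09}; the boundary contribution as a configuration point collides with the double point should yield a controlled "skein" formula for $v$. Iterating four times, the alternating sum of $v$ on any framed long knot with four transverse double points then vanishes, placing $v$ in the order-three part of the Vassiliev filtration.

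Finally, to pin down the coefficients in \eqref{eq:invariant_explicit_form}, I would use that the space of Vassiliev invariants of framed long knots of order at most three is spanned by $1$, $\lk$, $\lk^2$, $\lk^3$, $v_2$, $\lk\cdot v_2$, and $v_3$. I would compute the \emph{symbol} of $v$ on degree-three chord diagrams by reducing the principal boundary contributions of the configuration space integrals to a signed count of graph configurations; this fixes the coefficients of $v_3$ and $\lk\cdot v_2$ (the generators of the order-three quotient). The remaining lower-order coefficients would be determined by evaluating both sides of \eqref{eq:invariant_explicit_form} on convenient test knots: the trivial knot with arbitrary framing, on which the right-hand side vanishes since $v_2(\text{unknot})=v_3(\text{unknot})=0$, and the two trefoils $3_1^\pm$ with a chosen reference framing, where \eqref{eq:v_3_characterization} and the normalization of $v_2$ fix the scalar multiples.

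The main obstacle, I expect, is the codimension-one analysis in the second step: the configuration space integrals defining $I(X)$ have a delicate boundary behavior when the knot acquires a double point, and combining this with the $2$-parameter geometry of the Fox-Hatcher cycle requires careful identification of the principal faces that survive and their signs. Relatedly, computing the symbol on degree-three chord diagrams is a hands-on combinatorial calculation that must be consistent with the symmetry properties of the graphs contributing to $I(X)$, and the appearance of the nontrivial term $-\lk(\widetilde{f})v_2(f)$ (which is absent in the Gramain case \cite{K09}) reflects the fact that the Fox-Hatcher cycle interacts with the framing direction, so this dependence must be extracted explicitly.
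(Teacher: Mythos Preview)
Your overall architecture matches the paper's: establish $v$ as an invariant, bound its Vassiliev order, then pin down the coefficients. But the technical routes diverge, and one of your steps has a real gap.

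\textbf{On the order bound.} You propose a skein/jump approach: analyze $v(\wtf_+)-v(\wtf_-)$ as the knot crosses the discriminant and iterate. The paper instead attacks $D^3v$ directly. It decomposes the Fox--Hatcher cycle into a sequence of elementary FH-moves (one per crossing), localizes the integral $\int_{E_{k;c,c'}}\omega_{X_k}$ to small arcs $A_m,B_m$ around the three chosen crossings, and takes the limit $h_m\to 0$ of the arc separations. A case analysis (Proposition~4.4) then shows that only the graphs $X_1$ and $X_2$ survive in the alternating sum, and their contributions are computed explicitly (Propositions~4.5, 4.6). The upshot (Theorem~4.1) is that $D^3v$ depends only on which Gauss-diagram cycle the triple $c_1,c_2,c_3$ lies in; hence $D^4v=0$ immediately. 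Your approach is in principle viable but less concrete, and you correctly flag the codimension-one analysis as the crux; the paper's limit trick is what actually tames it.

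\textbf{On the coefficients.} Here there is a genuine gap. After using the trivial knot with arbitrary framing to kill the pure powers of $\lk$ (as in Lemma~4.7), you still need one absolute evaluation of $v$ on a nontrivial knot to fix the $v_2$-coefficient $c$. You say you will ``evaluate on $3_1^\pm$,'' but that means computing $\int_{p_*\FH_{\wtf}}I(X)$ directly, which you give no mechanism for. The paper's device is the homology relation: for the standard diagram of $3_1^+$, the FH-cycle $p_*\FH_{\wtf}$ is homologous to $3G_f$ (three times the Gramain cycle), so $v(3_1^+,3)=3\int_{G_f}I(X)=3v_2(3_1^+)=3$ by \cite{K09}. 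This single input, together with the values of $D^3v$ on the trefoil and figure-eight Gauss diagrams (which give two linear equations in $a,b$), yields $a=6$, $b=-1$, $c=0$. Without the Gramain-cycle comparison or an equivalent direct computation, your plan cannot close.
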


\begin{rem}\label{rem:framing_number}
The framing number $\lk(\wtf)$ is the linking number of $f=p(\wtf)$ and $f'$, where $f'$ is the long knot obtained by moving $f$ slightly into the direction of the second column of $w$.
In fact the map $p\times\lk\colon\fK\to\K\times\Z$ is a homotopy equivalence \cite[Proposition 9]{Budney03}, and the framing number uniquely determines the framing $w$ up to homotopy.
Thus we may regard a framed long knot as a pair $(f,w)$ of $f\in\K$ and $w\in\Z$.
\end{rem}

The $1$-cocycle $I(X)$ is constructed by means of the \emph{configuration space integral associated with graphs}, that was developed in \cite{AltschulerFreidel97,BottTaubes94,Kohno94} to describe Vassiliev invariants and was generalized in \cite{CCL02} to obtain a cochain map from a graph complex to $\Omega^*_{\DR}(\K)$ (up to some correction terms, that vanish in the cases of the spaces of long knots in high dimensional spaces).
Vassiliev invarians ($0$-cocycles of $\K$) are obtained from trivalent graphs, while our $1$-cocycle $I(X)$ comes from non-trivalent graphs (see Figure~\ref{fig:Nontrivalent_graph}).
It is very interesting, although not strange, that non-trivalent graphs may also have information of Vassiliev invariants.

We note that the right hand side of \eqref{eq:invariant_explicit_form} coincides with the formula for Mortier's invariant of order three. We thus expect that the 1-cocycle $I(X)$ is cohomologous to Mortier's $\alpha^1_3$ (this is true on the connected components of torus and hyperbolic knots; see \cite[p.~2]{Hatcher02}).

This paper is organized as follows.
In \S\ref{s:FH_cycle} the Fox-Hatcher cycle is introduced, and in \S\ref{s:cocycle} the construction of the 1-cocycle $I(X)$ is reviewed.
Our invariant $v$, the left hand side of \eqref{eq:invariant_explicit_form}, is shown to be of order three in Corollary~\ref{cor:v_order_3}.
The key ingredient is Theorem~\ref{thm:D^3v} and is proved in \S\ref{ss:D^3v}.
The formula \eqref{eq:invariant_explicit_form} is verified in \S\ref{ss:explicit_form}.

\section{The Fox-Hatcher cycle}\label{s:FH_cycle}
\subsection{The Fox-Hatcher cycle}
The Fox-Hatcher cycle
was introduced in \cite{Fox66}, and was later studied in \cite{Hatcher02} from the viewpoint of the space of knots.
If $f=p(\wtf)$ is not trivial, it then gives a non-zero element of $\pi_1(\fK_{\wtf })$, where $\fK_{\wtf}$ is the path component of $\fK$ containing $\wtf $.

The Fox-Hatcher cycle
is defined as follows.
A framed long knot can be seen as a based embedding $f\colon S^1\hookrightarrow S^3$ (we see $S^3$ as in $\R^4$) together with a framing $w$, with a prescribed behavior near the basepoint.
For $t\in S^1$, $w(t)$ is an orthonormal basis of $T_{f(t)}S^3$ whose first vector is $f'(t)/\abs{f'(t)}$.
There exists an $S^1$-action on the space of such embeddings defined by $(\theta\cdot(f,w))(t)\coloneqq(A(\theta)^{-1}f(t-\theta),A(\theta)^{-1}w(t-\theta))$, where $A(\theta)\in\SO(4)$ is the matrix given by $A(\theta)=(w(\theta),f(\theta))$.
For any $\wtf\in\fK$, this action determines a 1-cycle $\FH_{\wtf}\colon S^1\to\fK_{\wtf}$ and we call it the \emph{Fox-Hatcher cycle}.
We notice that the $S^1$-action looks very similar to the natural $S^1$-action on free loop spaces by the reparametrization, and in fact this action defines a \emph{BV-operation} on $H_*(\fK)$ \cite{K10}.

Practically it is convenient to describe $\FH$ on knot diagrams.
In this paper a framed long knot is drawn in a usual knot diagram with so-called \emph{blackboard framing}.

\begin{defn}
Let $D$ be a knot diagram of $\wtf$ with blackboard framing and $c$ the ``left-most'' crossing, namely the crossing that we meet first when traveling from $f(-1)$ along the natural orientation of $f$.
We call the transformation shown in Figure~\ref{fig:FH_1} the \emph{Fox-Hatcher move} (FH-move for short) \emph{on $c$}.
\end{defn}
\begin{figure}
\centering
{\unitlength 0.1in%
\begin{picture}(43.3700,10.0000)(2.6300,-12.0000)%
%
\special{pn 13}%
\special{pa 300 700}%
\special{pa 650 700}%
\special{fp}%
%
\special{pn 13}%
\special{pa 750 700}%
\special{pa 900 700}%
\special{fp}%
\special{sh 1}%
\special{pa 900 700}%
\special{pa 833 680}%
\special{pa 847 700}%
\special{pa 833 720}%
\special{pa 900 700}%
\special{fp}%
\put(12.0000,-7.0000){\makebox(0,0){$\cdots$}}%
%
\special{pn 13}%
\special{ar 1100 700 400 400 1.5707963 4.7123890}%
\put(13.0000,-3.0000){\makebox(0,0){$\cdots$}}%
\put(13.0000,-11.0000){\makebox(0,0){$\cdots$}}%
%
\special{pn 13}%
\special{pa 1400 700}%
\special{pa 2000 700}%
\special{fp}%
\special{sh 1}%
\special{pa 2000 700}%
\special{pa 1933 680}%
\special{pa 1947 700}%
\special{pa 1933 720}%
\special{pa 2000 700}%
\special{fp}%
\put(24.0000,-7.0000){\makebox(0,0){$\rightsquigarrow$}}%
%
\special{pn 13}%
\special{pa 2800 700}%
\special{pa 3500 700}%
\special{fp}%
\special{sh 1}%
\special{pa 3500 700}%
\special{pa 3433 680}%
\special{pa 3447 700}%
\special{pa 3433 720}%
\special{pa 3500 700}%
\special{fp}%
\put(38.0000,-7.0000){\makebox(0,0){$\cdots$}}%
%
\special{pn 13}%
\special{ar 3700 250 50 50 1.5707963 4.7123890}%
%
\special{pn 13}%
\special{ar 3700 1150 50 50 1.5707963 4.7123890}%
\put(39.0000,-3.0000){\makebox(0,0){$\cdots$}}%
\put(39.0000,-11.0000){\makebox(0,0){$\cdots$}}%
%
\special{pn 13}%
\special{ar 3700 700 500 500 4.7123890 1.5707963}%
%
\special{pn 13}%
\special{pa 4000 700}%
\special{pa 4150 700}%
\special{fp}%
%
\special{pn 13}%
\special{pa 4250 700}%
\special{pa 4600 700}%
\special{fp}%
\special{sh 1}%
\special{pa 4600 700}%
\special{pa 4533 680}%
\special{pa 4547 700}%
\special{pa 4533 720}%
\special{pa 4600 700}%
\special{fp}%
\put(5.0000,-7.0000){\makebox(0,0){$\bullet$}}%
\put(18.0000,-7.0000){\makebox(0,0){$\bullet$}}%
\put(5.0000,-8.0000){\makebox(0,0){$f(-1)$}}%
\put(18.0000,-8.0000){\makebox(0,0){$f(1)$}}%
\put(44.0000,-8.0000){\makebox(0,0){$f(1)$}}%
\put(44.0000,-7.0000){\makebox(0,0){$\bullet$}}%
\put(30.0000,-7.0000){\makebox(0,0){$\bullet$}}%
\put(30.0000,-8.0000){\makebox(0,0){$f(-1)$}}%
\put(7.5000,-6.5000){\makebox(0,0)[lb]{$c$}}%
\put(41.5000,-6.5000){\makebox(0,0)[rb]{$c'$}}%
%
\special{pn 8}%
\special{pn 8}%
\special{pa 3700 1100}%
\special{pa 3692 1100}%
\special{fp}%
\special{pa 3670 1099}%
\special{pa 3663 1098}%
\special{fp}%
\special{pa 3642 1096}%
\special{pa 3634 1095}%
\special{fp}%
\special{pa 3613 1090}%
\special{pa 3605 1088}%
\special{fp}%
\special{pa 3585 1083}%
\special{pa 3578 1081}%
\special{fp}%
\special{pa 3559 1074}%
\special{pa 3552 1072}%
\special{fp}%
\special{pa 3533 1064}%
\special{pa 3526 1060}%
\special{fp}%
\special{pa 3507 1051}%
\special{pa 3501 1047}%
\special{fp}%
\special{pa 3483 1036}%
\special{pa 3477 1032}%
\special{fp}%
\special{pa 3460 1020}%
\special{pa 3454 1015}%
\special{fp}%
\special{pa 3438 1002}%
\special{pa 3433 998}%
\special{fp}%
\special{pa 3419 985}%
\special{pa 3414 979}%
\special{fp}%
\special{pa 3400 964}%
\special{pa 3396 959}%
\special{fp}%
\special{pa 3383 944}%
\special{pa 3378 937}%
\special{fp}%
\special{pa 3366 921}%
\special{pa 3363 915}%
\special{fp}%
\special{pa 3351 896}%
\special{pa 3348 890}%
\special{fp}%
\special{pa 3338 871}%
\special{pa 3335 864}%
\special{fp}%
\special{pa 3327 845}%
\special{pa 3325 838}%
\special{fp}%
\special{pa 3318 818}%
\special{pa 3315 810}%
\special{fp}%
\special{pa 3310 790}%
\special{pa 3309 783}%
\special{fp}%
\special{pa 3305 762}%
\special{pa 3304 754}%
\special{fp}%
\special{pa 3301 733}%
\special{pa 3301 725}%
\special{fp}%
\special{pa 3300 703}%
\special{pa 3300 695}%
\special{fp}%
\special{pa 3301 674}%
\special{pa 3301 666}%
\special{fp}%
\special{pa 3304 645}%
\special{pa 3305 637}%
\special{fp}%
\special{pa 3309 616}%
\special{pa 3310 609}%
\special{fp}%
\special{pa 3316 589}%
\special{pa 3318 581}%
\special{fp}%
\special{pa 3325 561}%
\special{pa 3327 554}%
\special{fp}%
\special{pa 3336 535}%
\special{pa 3339 528}%
\special{fp}%
\special{pa 3348 509}%
\special{pa 3352 503}%
\special{fp}%
\special{pa 3363 485}%
\special{pa 3367 479}%
\special{fp}%
\special{pa 3378 463}%
\special{pa 3382 457}%
\special{fp}%
\special{pa 3395 441}%
\special{pa 3400 436}%
\special{fp}%
\special{pa 3414 421}%
\special{pa 3418 416}%
\special{fp}%
\special{pa 3433 402}%
\special{pa 3438 398}%
\special{fp}%
\special{pa 3454 385}%
\special{pa 3460 380}%
\special{fp}%
\special{pa 3476 369}%
\special{pa 3482 365}%
\special{fp}%
\special{pa 3500 353}%
\special{pa 3507 350}%
\special{fp}%
\special{pa 3525 340}%
\special{pa 3532 337}%
\special{fp}%
\special{pa 3551 329}%
\special{pa 3558 326}%
\special{fp}%
\special{pa 3578 319}%
\special{pa 3585 317}%
\special{fp}%
\special{pa 3606 311}%
\special{pa 3614 309}%
\special{fp}%
\special{pa 3634 305}%
\special{pa 3641 304}%
\special{fp}%
\special{pa 3663 302}%
\special{pa 3670 301}%
\special{fp}%
\special{pa 3692 300}%
\special{pa 3700 300}%
\special{fp}%
\end{picture}}%
\caption{The Fox-Hatcher move on $c$}
\label{fig:FH_1}
\end{figure}
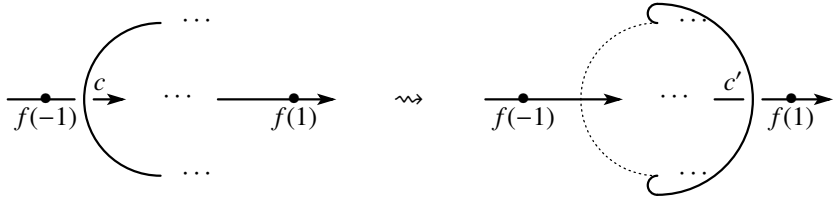
The left-most crossing $c$ disappears after the FH-move on $c$ and the right-most crossing $c'$ is created.
If the arc that moves in the FH-move is the over-arc (resp.\ under-arc) at $c$, then after the FH-move it becomes the over-arc (resp.\ under-arc) at $c'$.
We arrive the original diagram $D$ after performing the FH-moves for all the other crossings $c$ of $D$ and the newborn crossings $c'$.
The sequence of these FH-moves realizes $\FH_{\wtf}$.

\subsection{FH moves and Gauss diagrams}
The configuration of crossings of a knot diagram is encoded by \emph{(linear) Gauss diagrams}.
Here we see how the FH-move on the left-most crossing changes the Gauss diagram.

\begin{defn}\label{def:chord_diagram}
A \emph{(linear) Gauss diagram} is a subdivision of $\{1,2,\dots,2n\}$ for some natural number $n$ into a union $\bigcup_{1\le k\le n}\{i_k,j_k\}$ of $n$ subsets of cardinarity $2$.
\end{defn}

A Gauss diagram can be seen as a graph on $\R^1$ with even number of vertices all of which are on $\R^1$ and each vertex is joined by exactly one edge with another vertex.
Here segments in $\R^1$ interposed between two vertices are not regarded as edges.
See Figure~\ref{fig:Gauss} for example.
\begin{figure}
\centering
\input{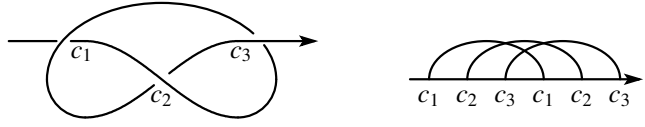}
\caption{A knot diagram and its Gauss diagram}
\label{fig:Gauss}
\end{figure}

\begin{defn}[{\cite[Definition~3.3]{K09}}]\label{def:crossings_respecting_Gauss_diagram}
Let $c_1,\dots,c_n$ be (part of the) crossings of a knot diagram of $f\in\K$ such that each $c_i$ corresponds to $f(p_i)$ and $f(q_i)$, with $-1<p_1<\dots<p_n<1$ and $p_i<q_i$ for any $i=1,\dots,n$.
We say that the crossings $c_1,\dots,c_n$ \emph{respect} a Gauss diagram $G$ if $G$ is isomorphic to the Gauss diagram $G_{c_1,\dots,c_n}$ obtained by joining $p_i$ and $q_i$ for $i=1,\dots,n$.
See Figure~\ref{fig:Gauss}.
\end{defn}

Under the setting of Definition~\ref{def:crossings_respecting_Gauss_diagram}, the left-most crossing is $c_1$.
Let $G$ be the Gauss diagram that $c_1,\dots,c_n$ respect.
Then the new knot diagram obtained by performing the FH-move on $c_1$ has crossings $c_2,\dots,c_n,c_1'$ that respect the Gauss diagram $G'$ obtained by moving the left-most vertex (corresponding to $c_1$) to the right-most one.
See Figure~\ref{fig:Gauss_FH}.
\begin{figure}
\centering
{\unitlength 0.1in%
\begin{picture}(42.0000,5.2700)(2.0000,-8.2700)%
%
\special{pn 13}%
\special{pa 600 500}%
\special{pa 1000 500}%
\special{pa 1000 700}%
\special{pa 600 700}%
\special{pa 600 500}%
\special{pa 1000 500}%
\special{fp}%
%
\special{pn 13}%
\special{pa 200 600}%
\special{pa 600 600}%
\special{fp}%
%
\special{pn 13}%
\special{pa 1000 600}%
\special{pa 1400 600}%
\special{fp}%
%
\special{pn 13}%
\special{pa 1400 500}%
\special{pa 1800 500}%
\special{pa 1800 700}%
\special{pa 1400 700}%
\special{pa 1400 500}%
\special{pa 1800 500}%
\special{fp}%
%
\special{pn 13}%
\special{pa 1800 600}%
\special{pa 2200 600}%
\special{fp}%
\special{sh 1}%
\special{pa 2200 600}%
\special{pa 2133 580}%
\special{pa 2147 600}%
\special{pa 2133 620}%
\special{pa 2200 600}%
\special{fp}%
%
\special{pn 13}%
\special{ar 800 600 400 300 3.1415927 6.2831853}%
\put(12.0000,-9.0000){\makebox(0,0){$G$}}%
\put(4.0000,-7.0000){\makebox(0,0){$c_1$}}%
\put(12.0000,-7.0000){\makebox(0,0){$c_1$}}%
\put(24.0000,-5.0000){\makebox(0,0){$\rightsquigarrow$}}%
%
\special{pn 13}%
\special{pa 2600 600}%
\special{pa 2800 600}%
\special{fp}%
%
\special{pn 13}%
\special{pa 2800 500}%
\special{pa 3200 500}%
\special{pa 3200 700}%
\special{pa 2800 700}%
\special{pa 2800 500}%
\special{pa 3200 500}%
\special{fp}%
%
\special{pn 13}%
\special{ar 3800 600 400 300 3.1415927 6.2831853}%
%
\special{pn 13}%
\special{pa 3200 600}%
\special{pa 3600 600}%
\special{fp}%
%
\special{pn 13}%
\special{pa 3600 500}%
\special{pa 4000 500}%
\special{pa 4000 700}%
\special{pa 3600 700}%
\special{pa 3600 500}%
\special{pa 4000 500}%
\special{fp}%
%
\special{pn 13}%
\special{pa 4000 600}%
\special{pa 4400 600}%
\special{fp}%
\special{sh 1}%
\special{pa 4400 600}%
\special{pa 4333 580}%
\special{pa 4347 600}%
\special{pa 4333 620}%
\special{pa 4400 600}%
\special{fp}%
\put(34.0000,-7.0000){\makebox(0,0){$c_1'$}}%
\put(42.0000,-7.0000){\makebox(0,0){$c_1'$}}%
\put(34.0000,-9.0000){\makebox(0,0){$G'$}}%
%
\special{pn 8}%
\special{ar 1100 500 400 200 3.1415927 6.2831853}%
%
\special{pn 8}%
\special{ar 1300 500 400 200 3.1415927 6.2831853}%
%
\special{pn 8}%
\special{ar 3300 500 400 200 3.1415927 6.2831853}%
%
\special{pn 8}%
\special{ar 3500 500 400 200 3.1415927 6.2831853}%
\end{picture}}%
\caption{The FH-move on $c_1$ on the Gauss diagram}
\label{fig:Gauss_FH}
\end{figure}
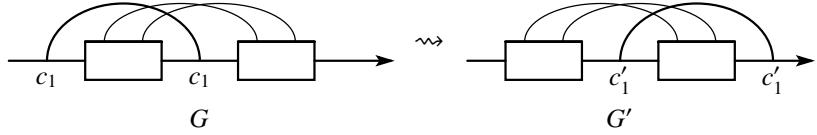

We eventually arrive the original Gauss diagram after performing the FH-moves on all the crossings $c$ of the original diagram and the newborn crossings $c'$.
This sequence produces a cycle of Gauss diagrams (see Figures~\ref{fig:Gauss_cycle_1}, \ref{fig:Gauss_cycle_2}, \ref{fig:Gauss_cycle_3}).
In this way the set of all the Gauss diagrams is decomposed into the disjoint cycles.

\section{The cocycle $I(X)$}\label{s:cocycle}
In this section we give a quick review of the construction of differential forms on $\K$ associated with graphs.
See also \cite{AltschulerFreidel97,BottTaubes94,CCL02,Kohno94,Volic05} for details.

By a \emph{graph} we mean the oriented real line $\R^1$ together with two kind of vertices, one is called \emph{interval} and the other \emph{free}, and \emph{oriented edges} connecting them (see Figure~\ref{fig:graph}).
\begin{figure}
\centering
{\unitlength 0.1in%
\begin{picture}(16.3700,6.0000)(5.6300,-8.2700)%
%
\special{pn 13}%
\special{pa 600 800}%
\special{pa 2200 800}%
\special{fp}%
\special{sh 1}%
\special{pa 2200 800}%
\special{pa 2133 780}%
\special{pa 2147 800}%
\special{pa 2133 820}%
\special{pa 2200 800}%
\special{fp}%
\put(8.0000,-8.0000){\makebox(0,0){$\bullet$}}%
\put(16.0000,-8.0000){\makebox(0,0){$\bullet$}}%
%
\special{pn 13}%
\special{pn 13}%
\special{pa 1200 600}%
\special{pa 1213 600}%
\special{fp}%
\special{pa 1230 601}%
\special{pa 1243 601}%
\special{fp}%
\special{pa 1259 602}%
\special{pa 1272 603}%
\special{fp}%
\special{pa 1289 605}%
\special{pa 1301 607}%
\special{fp}%
\special{pa 1318 609}%
\special{pa 1331 611}%
\special{fp}%
\special{pa 1347 614}%
\special{pa 1360 616}%
\special{fp}%
\special{pa 1376 621}%
\special{pa 1388 624}%
\special{fp}%
\special{pa 1404 628}%
\special{pa 1417 632}%
\special{fp}%
\special{pa 1433 638}%
\special{pa 1445 642}%
\special{fp}%
\special{pa 1460 648}%
\special{pa 1472 654}%
\special{fp}%
\special{pa 1487 661}%
\special{pa 1498 667}%
\special{fp}%
\special{pa 1512 675}%
\special{pa 1523 682}%
\special{fp}%
\special{pa 1537 692}%
\special{pa 1547 700}%
\special{fp}%
\special{pa 1559 712}%
\special{pa 1568 721}%
\special{fp}%
\special{pa 1578 734}%
\special{pa 1585 745}%
\special{fp}%
\special{pa 1592 759}%
\special{pa 1596 771}%
\special{fp}%
\special{pa 1599 787}%
\special{pa 1600 800}%
\special{fp}%
\put(8.0000,-4.0000){\makebox(0,0){$\bullet$}}%
%
\special{pn 13}%
\special{pa 800 800}%
\special{pa 800 600}%
\special{dt 0.030}%
\special{sh 1}%
\special{pa 800 600}%
\special{pa 780 667}%
\special{pa 800 653}%
\special{pa 820 667}%
\special{pa 800 600}%
\special{fp}%
%
\special{pn 13}%
\special{pa 800 600}%
\special{pa 800 400}%
\special{dt 0.030}%
\put(10.0000,-8.0000){\makebox(0,0){$\bullet$}}%
%
\special{pn 13}%
\special{pa 800 400}%
\special{pa 900 600}%
\special{dt 0.030}%
\special{sh 1}%
\special{pa 900 600}%
\special{pa 888 531}%
\special{pa 876 552}%
\special{pa 852 549}%
\special{pa 900 600}%
\special{fp}%
%
\special{pn 13}%
\special{pa 900 600}%
\special{pa 1000 800}%
\special{dt 0.030}%
%
\special{pn 13}%
\special{pn 13}%
\special{pa 800 800}%
\special{pa 801 787}%
\special{fp}%
\special{pa 804 771}%
\special{pa 808 760}%
\special{fp}%
\special{pa 815 745}%
\special{pa 822 734}%
\special{fp}%
\special{pa 833 721}%
\special{pa 841 712}%
\special{fp}%
\special{pa 853 700}%
\special{pa 863 692}%
\special{fp}%
\special{pa 877 683}%
\special{pa 887 675}%
\special{fp}%
\special{pa 902 667}%
\special{pa 913 661}%
\special{fp}%
\special{pa 928 653}%
\special{pa 940 648}%
\special{fp}%
\special{pa 955 642}%
\special{pa 967 637}%
\special{fp}%
\special{pa 983 632}%
\special{pa 996 628}%
\special{fp}%
\special{pa 1012 623}%
\special{pa 1024 620}%
\special{fp}%
\special{pa 1041 617}%
\special{pa 1053 614}%
\special{fp}%
\special{pa 1069 611}%
\special{pa 1082 609}%
\special{fp}%
\special{pa 1098 606}%
\special{pa 1111 605}%
\special{fp}%
\special{pa 1128 603}%
\special{pa 1141 602}%
\special{fp}%
\special{pa 1157 601}%
\special{pa 1170 601}%
\special{fp}%
\special{pa 1187 600}%
\special{pa 1200 600}%
\special{fp}%
%
\special{pn 13}%
\special{pa 1180 600}%
\special{pa 1200 600}%
\special{dt 0.030}%
\special{sh 1}%
\special{pa 1200 600}%
\special{pa 1133 580}%
\special{pa 1147 600}%
\special{pa 1133 620}%
\special{pa 1200 600}%
\special{fp}%
\put(14.0000,-4.0000){\makebox(0,0){$\bullet$}}%
\put(12.0000,-8.0000){\makebox(0,0){$\bullet$}}%
%
\special{pn 13}%
\special{pa 800 400}%
\special{pa 1100 400}%
\special{dt 0.030}%
\special{sh 1}%
\special{pa 1100 400}%
\special{pa 1033 380}%
\special{pa 1047 400}%
\special{pa 1033 420}%
\special{pa 1100 400}%
\special{fp}%
%
\special{pn 13}%
\special{pa 1100 400}%
\special{pa 1400 400}%
\special{dt 0.030}%
%
\special{pn 13}%
\special{pa 1400 400}%
\special{pa 1300 600}%
\special{dt 0.030}%
\special{sh 1}%
\special{pa 1300 600}%
\special{pa 1348 549}%
\special{pa 1324 552}%
\special{pa 1312 531}%
\special{pa 1300 600}%
\special{fp}%
%
\special{pn 13}%
\special{pa 1300 600}%
\special{pa 1200 800}%
\special{dt 0.030}%
%
\special{pn 13}%
\special{pa 1400 400}%
\special{pa 1600 600}%
\special{dt 0.030}%
\special{sh 1}%
\special{pa 1600 600}%
\special{pa 1567 539}%
\special{pa 1562 562}%
\special{pa 1539 567}%
\special{pa 1600 600}%
\special{fp}%
%
\special{pn 13}%
\special{pa 1600 600}%
\special{pa 1800 800}%
\special{dt 0.030}%
\put(18.0000,-8.0000){\makebox(0,0){$\bullet$}}%
\put(20.0000,-8.0000){\makebox(0,0){$\bullet$}}%
%
\special{pn 13}%
\special{pn 13}%
\special{pa 2100 700}%
\special{pa 2099 712}%
\special{fp}%
\special{pa 2096 727}%
\special{pa 2093 738}%
\special{fp}%
\special{pa 2085 752}%
\special{pa 2079 761}%
\special{fp}%
\special{pa 2071 770}%
\special{pa 2063 777}%
\special{fp}%
\special{pa 2052 785}%
\special{pa 2042 791}%
\special{fp}%
\special{pa 2029 796}%
\special{pa 2018 798}%
\special{fp}%
\special{pa 2003 800}%
\special{pa 1990 800}%
\special{fp}%
\special{pa 1976 797}%
\special{pa 1964 793}%
\special{fp}%
\special{pa 1950 787}%
\special{pa 1942 781}%
\special{fp}%
\special{pa 1931 773}%
\special{pa 1924 765}%
\special{fp}%
\special{pa 1916 755}%
\special{pa 1911 745}%
\special{fp}%
\special{pa 1905 732}%
\special{pa 1902 721}%
\special{fp}%
\special{pa 1900 705}%
\special{pa 1900 692}%
\special{fp}%
\special{pa 1903 677}%
\special{pa 1906 665}%
\special{fp}%
\special{pa 1913 650}%
\special{pa 1919 642}%
\special{fp}%
\special{pa 1928 631}%
\special{pa 1935 624}%
\special{fp}%
\special{pa 1947 615}%
\special{pa 1957 610}%
\special{fp}%
\special{pa 1971 604}%
\special{pa 1983 601}%
\special{fp}%
\special{pa 1999 600}%
\special{pa 2011 601}%
\special{fp}%
\special{pa 2026 604}%
\special{pa 2037 607}%
\special{fp}%
\special{pa 2050 614}%
\special{pa 2060 620}%
\special{fp}%
\special{pa 2070 629}%
\special{pa 2077 636}%
\special{fp}%
\special{pa 2085 648}%
\special{pa 2091 658}%
\special{fp}%
\special{pa 2096 673}%
\special{pa 2099 684}%
\special{fp}%
\put(8.0000,-9.0000){\makebox(0,0){$1$}}%
\put(10.0000,-9.0000){\makebox(0,0){$2$}}%
\put(12.0000,-9.0000){\makebox(0,0){$3$}}%
\put(16.0000,-9.0000){\makebox(0,0){$4$}}%
\put(18.0000,-9.0000){\makebox(0,0){$5$}}%
\put(20.0000,-9.0000){\makebox(0,0){$6$}}%
\put(8.0000,-3.0000){\makebox(0,0){$7$}}%
\put(14.0000,-3.0000){\makebox(0,0){$8$}}%
\end{picture}}%
\caption{An example of graphs; the i-vertices are those labeled by $1,\dots,6$ and the f-vertices are those labeled by $7,8$, and there is a loop at the i-vertex labeled by $6$}
\label{fig:graph}
\end{figure}
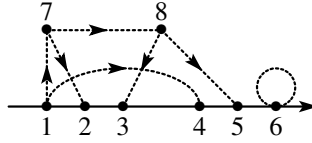
The interval vertices (or i-vertices for short) are placed on the oriented line while the free vertices (or f-vertices for short) are  not on the line.
The i-vertices and the f-vertices of a graph $X$ are labeled by respectively the numbers $1,\dots,\vi$ and $\vi+1,\dots,\vi+\vf$, where $\vi$ and $\vf$ are respectively the numbers of the i-vertices and the f-vertices of $X$, so that the labels of the i-vertices respect the orientation of the real line.
We allow graphs to have a \emph{loop}, an edge that has exactly one i-vertex as its endpoint (see Figure~\ref{fig:graph}).

For a graph $X$, let $E_X$ be the configuration space
\begin{equation}
 E_X\coloneqq\{(f,(y_1,\dots,y_{\vi+\vf}))\in\K\times\Conf_{\vi+\vf}(\R^3)\mid y_i=f(x_i) \text{ for some }x_i\in\R^1\text{ for }i=1,\dots,\vi\},
\end{equation}
where
\begin{equation}
 \Conf_k(M)\coloneqq\{(x_1,\dots,x_k)\in M^{\times k}\mid x_i\ne x_j\text{ if }i\ne j\}
\end{equation}
is the space of $k$-point configurations on a space $M$.

To an oriented edge $\alpha$ of $X$ from the $i$-th vertex to the $j$-th vertex ($i\ne j$), we assign a map
\begin{equation}
 \varphi_{\alpha}\colon E_X\to S^2,\quad
 \varphi_{\alpha}(f,(y_1,\dots,y_{v_i+v_f}))\coloneqq\frac{y_j-y_i}{\abs{y_j-y_i}}.
\end{equation}
To a loop $\alpha$ at $k$-th i-vertex ($1\le i\le\vi$) we assign
\begin{equation}
 \varphi_{\alpha}\colon E_X\to S^2,\quad
 \varphi_{\alpha}(f,(y_1,\dots,y_{v_i+v_f}))\coloneqq\frac{f'(x_k)}{\abs{f'(x_k)}},
\end{equation}
where $x_k\in\R^1$ satisfies $y_k=f(x_k)$.

Let $\vol\in\Omega_{\DR}^2(S^2)$ be a unit volume form of $S^2$ that is anti-symmetric, meaning that $i^*\vol=-\vol$ for the antipodal map $i\colon S^2\to S^2$.
Define $\omega_X\in\Omega^{2e}_{\DR}(E_X)$ by
\begin{equation}
 \omega_X\coloneqq\bigwedge_{\text{edges }\alpha\text{ of }X}\varphi^*_{\alpha}(\vol),
\end{equation}
where $e$ is the number of edges of $X$.
The order of the edges is not important because $\deg\vol=2$ is even.

Let $\pi_X\colon E_X\to\K$ be the first projection.
This is a fiber bundle with fiber
\begin{equation}\label{eq:fibers}
 \pi_X^{-1}(f)=\{y\in\Conf_{\vi+\vf}(\R^3)\mid y_i=f(x_i) \text{ for some }x_i\in\R^1\text{ for }i=1,\dots,\vi\}
\end{equation}
of dimension $\vi+3\vf$.
Integrating $\omega_X$ along the fiber, we get
\begin{equation}\label{eq:I(X)}
 I(X)\coloneqq\pi_{X*}(\omega_X)\in\Omega^{2e-\vi-3\vf}_{\DR}(\K).
\end{equation}

\begin{rem}
The integration \eqref{eq:I(X)} converges since we can compactify all the fibers of $\pi_X$ by adding the boundary faces to \eqref{eq:fibers} so that the maps $\varphi_{\alpha}$ are smoothly extended to the compactification.
See \cite{AxelrodSinger94,BottTaubes94,CCL02,Kohno94}.
\end{rem}

\begin{ex}\label{ex:I(X)_example}
Let $X$ be the graph that has only one edge $\alpha$ joining two i-vertices (Figure~\ref{fig:I(X)_example}, the left).
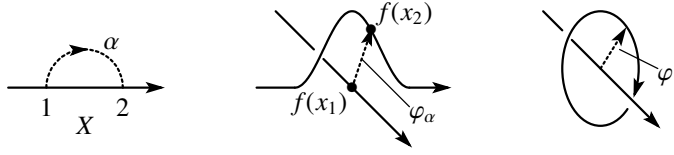
\begin{figure}
\centering
{\unitlength 0.1in%
\begin{picture}(34.0000,7.7000)(2.0000,-8.0000)%
%
\special{pn 13}%
\special{pa 200 500}%
\special{pa 1000 500}%
\special{fp}%
\special{sh 1}%
\special{pa 1000 500}%
\special{pa 933 480}%
\special{pa 947 500}%
\special{pa 933 520}%
\special{pa 1000 500}%
\special{fp}%
%
\special{pn 13}%
\special{pn 13}%
\special{pa 600 300}%
\special{pa 613 300}%
\special{fp}%
\special{pa 630 302}%
\special{pa 643 305}%
\special{fp}%
\special{pa 659 309}%
\special{pa 672 313}%
\special{fp}%
\special{pa 687 320}%
\special{pa 699 326}%
\special{fp}%
\special{pa 713 335}%
\special{pa 724 343}%
\special{fp}%
\special{pa 737 354}%
\special{pa 746 363}%
\special{fp}%
\special{pa 757 376}%
\special{pa 765 386}%
\special{fp}%
\special{pa 774 401}%
\special{pa 780 413}%
\special{fp}%
\special{pa 786 429}%
\special{pa 791 441}%
\special{fp}%
\special{pa 795 457}%
\special{pa 798 470}%
\special{fp}%
\special{pa 799 487}%
\special{pa 800 500}%
\special{fp}%
\put(7.0000,-3.0000){\makebox(0,0)[lb]{$\alpha$}}%
\put(6.0000,-7.0000){\makebox(0,0){$X$}}%
%
\special{pn 13}%
\special{pa 1500 500}%
\special{pa 1700 500}%
\special{fp}%
%
\special{pn 13}%
\special{pa 1850 350}%
\special{pa 2300 800}%
\special{fp}%
\special{sh 1}%
\special{pa 2300 800}%
\special{pa 2267 739}%
\special{pa 2262 762}%
\special{pa 2239 767}%
\special{pa 2300 800}%
\special{fp}%
%
\special{pn 13}%
\special{pa 1800 300}%
\special{pa 1600 100}%
\special{fp}%
%
\special{pn 13}%
\special{pa 2000 500}%
\special{pa 2100 200}%
\special{dt 0.030}%
\special{sh 1}%
\special{pa 2100 200}%
\special{pa 2060 257}%
\special{pa 2083 251}%
\special{pa 2098 270}%
\special{pa 2100 200}%
\special{fp}%
\put(19.7500,-5.2500){\makebox(0,0)[rt]{$f(x_1)$}}%
\put(21.2500,-1.7500){\makebox(0,0)[lb]{$f(x_2)$}}%
%
\special{pn 13}%
\special{ar 3300 400 200 300 0.7853982 0.4636476}%
%
\special{pn 13}%
\special{pa 3484 518}%
\special{pa 3479 534}%
\special{fp}%
\special{sh 1}%
\special{pa 3479 534}%
\special{pa 3518 476}%
\special{pa 3495 483}%
\special{pa 3480 464}%
\special{pa 3479 534}%
\special{fp}%
%
\special{pn 13}%
\special{pa 3150 250}%
\special{pa 3600 700}%
\special{fp}%
\special{sh 1}%
\special{pa 3600 700}%
\special{pa 3567 639}%
\special{pa 3562 662}%
\special{pa 3539 667}%
\special{pa 3600 700}%
\special{fp}%
%
\special{pn 13}%
\special{pa 3100 200}%
\special{pa 3000 100}%
\special{fp}%
%
\special{pn 13}%
\special{pa 2300 500}%
\special{pa 2500 500}%
\special{fp}%
\special{sh 1}%
\special{pa 2500 500}%
\special{pa 2433 480}%
\special{pa 2447 500}%
\special{pa 2433 520}%
\special{pa 2500 500}%
\special{fp}%
%
\special{pn 13}%
\special{pa 3300 400}%
\special{pa 3425 200}%
\special{dt 0.030}%
\special{sh 1}%
\special{pa 3425 200}%
\special{pa 3373 246}%
\special{pa 3397 245}%
\special{pa 3407 267}%
\special{pa 3425 200}%
\special{fp}%
\put(36.0000,-4.0000){\makebox(0,0)[lt]{$\varphi$}}%
%
\special{pn 8}%
\special{pa 3600 400}%
\special{pa 3400 300}%
\special{fp}%
\put(23.0000,-6.0000){\makebox(0,0)[lt]{$\varphi_{\alpha}$}}%
\special{pn 13}%
\special{pa 1700 500}%
\special{pa 1705 500}%
\special{pa 1715 498}%
\special{pa 1720 496}%
\special{pa 1735 487}%
\special{pa 1740 483}%
\special{pa 1755 468}%
\special{pa 1760 462}%
\special{pa 1765 455}%
\special{pa 1770 449}%
\special{pa 1775 441}%
\special{pa 1780 434}%
\special{pa 1790 418}%
\special{pa 1805 391}%
\special{pa 1810 381}%
\special{pa 1815 372}%
\special{pa 1830 342}%
\special{pa 1835 331}%
\special{pa 1840 321}%
\special{pa 1845 310}%
\special{pa 1855 290}%
\special{pa 1860 279}%
\special{pa 1865 269}%
\special{pa 1870 258}%
\special{pa 1885 228}%
\special{pa 1890 219}%
\special{pa 1895 209}%
\special{pa 1910 182}%
\special{pa 1920 166}%
\special{pa 1925 159}%
\special{pa 1930 151}%
\special{pa 1935 145}%
\special{pa 1940 138}%
\special{pa 1945 132}%
\special{pa 1960 117}%
\special{pa 1965 113}%
\special{pa 1980 104}%
\special{pa 1985 102}%
\special{pa 1995 100}%
\special{pa 2005 100}%
\special{pa 2015 102}%
\special{pa 2020 104}%
\special{pa 2035 113}%
\special{pa 2040 117}%
\special{pa 2055 132}%
\special{pa 2060 138}%
\special{pa 2065 145}%
\special{pa 2070 151}%
\special{pa 2075 159}%
\special{pa 2080 166}%
\special{pa 2090 182}%
\special{pa 2105 209}%
\special{pa 2110 219}%
\special{pa 2115 228}%
\special{pa 2130 258}%
\special{pa 2135 269}%
\special{pa 2140 279}%
\special{pa 2145 290}%
\special{pa 2155 310}%
\special{pa 2160 321}%
\special{pa 2165 331}%
\special{pa 2170 342}%
\special{pa 2185 372}%
\special{pa 2190 381}%
\special{pa 2195 391}%
\special{pa 2210 418}%
\special{pa 2220 434}%
\special{pa 2225 441}%
\special{pa 2230 449}%
\special{pa 2235 455}%
\special{pa 2240 462}%
\special{pa 2245 468}%
\special{pa 2260 483}%
\special{pa 2265 487}%
\special{pa 2280 496}%
\special{pa 2285 498}%
\special{pa 2295 500}%
\special{pa 2300 500}%
\special{fp}%
\put(20.0000,-5.0000){\makebox(0,0){$\bullet$}}%
\put(21.0000,-2.0000){\makebox(0,0){$\bullet$}}%
%
\special{pn 8}%
\special{pa 2300 600}%
\special{pa 2050 350}%
\special{fp}%
%
\special{pn 13}%
\special{pn 13}%
\special{pa 400 500}%
\special{pa 400 486}%
\special{fp}%
\special{pa 402 469}%
\special{pa 405 455}%
\special{fp}%
\special{pa 410 438}%
\special{pa 415 425}%
\special{fp}%
\special{pa 422 409}%
\special{pa 428 397}%
\special{fp}%
\special{pa 439 382}%
\special{pa 447 372}%
\special{fp}%
\special{pa 459 359}%
\special{pa 469 349}%
\special{fp}%
\special{pa 482 338}%
\special{pa 494 331}%
\special{fp}%
\special{pa 509 322}%
\special{pa 521 316}%
\special{fp}%
\special{pa 538 310}%
\special{pa 551 306}%
\special{fp}%
\special{pa 569 302}%
\special{pa 582 301}%
\special{fp}%
\special{pa 600 300}%
\special{pa 600 300}%
\special{fp}%
%
\special{pn 13}%
\special{pa 585 301}%
\special{pa 600 300}%
\special{dt 0.030}%
\special{sh 1}%
\special{pa 600 300}%
\special{pa 532 284}%
\special{pa 547 304}%
\special{pa 535 324}%
\special{pa 600 300}%
\special{fp}%
\put(4.0000,-6.0000){\makebox(0,0){$1$}}%
\put(8.0000,-6.0000){\makebox(0,0){$2$}}%
\end{picture}}%
\caption{The graph $X$ in Example~\ref{ex:I(X)_example} (the left), configurations where the image of $\varphi_{\alpha}$ is contained in $\mathrm{supp}(\vol)$ (the center), ``Hopf link'' (the right)}
\label{fig:I(X)_example}
\end{figure}
Then $E_X\approx\K\times\Conf_2(\R^1)$ and $I(X)\in\Omega^0_{\DR}(\K)$ is a function on $\K$.

In this paper we use an anti-symmetric unit volume form $\vol$ whose support is contained in (small) neighborhoods $U_{\pm}$ of the poles $(0,0,\pm 1)\in S^2$.
Suppose $f\in\K$ is ``almost planer,'' meaning that
\begin{itemize}
\item
	the image of $f$ coincides with a knot diagram $D$ on $\R^2\times\{0\}$ except for neighborhoods of crossings of $D$,
\item
	near the crossings the image of $f$ is contained in $\R^2\times(-\epsilon,\epsilon)$, and
\item
	the unit tangent vectors $f'(x)/\abs{f'(x)}$ are not contained in $U_{\pm}$.
\end{itemize}
Then $\varphi_{\alpha}\colon\{f\}\times\Conf_2(\R^1)\to S^2$ has its image in $U_{\pm}$ only on the subspace of $(x_1,x_2)$ such that $f(x_1)$ and $f(x_2)$ are on the over- and under-arcs of a crossing of $D$, one on each arc (Figure~\ref{fig:I(X)_example}, the center).
Each crossing contributes to the value $I(X)(f)$ by the half of its sign; because this contribution is the half of the linking number of the ``Hopf link'' (Figure~\ref{fig:I(X)_example}, the right), which is equal to the sign of the crossing.
\end{ex}

By the generalized Stokes' theorem for fiber integrations, we have
\begin{equation}
 dI(X)=\pi_{X*}(d\omega_X)\pm\pi_{X*}^{\partial}(\omega)=\pm\pi_{X*}^{\partial}(\omega),
\end{equation}
where $\pi_X^{\partial}$ is the restriction of $\pi_X$ to the fiberwise boundary.
There exists ``almost'' 1-1 correspondence between
\begin{itemize}
\item
	the codimension 1 faces of the boundary that nontrivially contribute to $dI(X)$, and
\item
	the graphs obtained from $X$ by contracting one of its edges and \emph{arcs} (segments in $\R^1$ interposed between two i-vertices).
\end{itemize}
Here we in fact need the anti-symmetry of $\vol$.
We thus have
\begin{equation}
 dI(X)=I(\partial X)+\text{(correction terms)},
\end{equation}
where $\partial X$ is a formal sum of graphs obtained from $X$ by contracting one of its edges and arcs.
The above correspondence is not rigorously 1-1 and we need ``correction terms,'' that are conjectured to vanish.
We can therefore get a closed form of $\K$ if we have a \emph{graph cocycle}, a formal sum $X$ of graphs with $\partial X=0$ (and if we have appropriate correction terms).
It is known that any $\R$-valued Vassiliev invariant can be produced from a \emph{trivalent} graph cocycle.

In \cite{K07,K09} the second named author has given an example of \emph{non-trivalent} graph cocycle
\begin{equation}
 X=\sum_{1\le k\le 9}a_kX_k,\quad
 (a_1,\dots,a_9)=(-2,1,2,-2,2,-1,1,-1,1) 
\end{equation}
(see Figure~\ref{fig:Nontrivalent_graph}), and has proved that $I(X)\in H^1_{\DR}(\K)$ is not zero.
\begin{figure}
\centering
\input{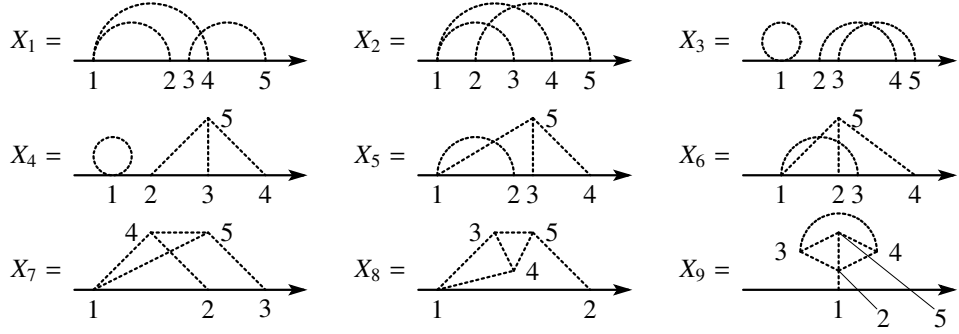}
\caption{The graphs $X_1,\dots,X_9$ that give a graph cocycle $\sum_ia_iX_i$; the edges are oriented from the vertex with the smaller labels}
\label{fig:Nontrivalent_graph}
\end{figure}
This follows from

\begin{thm}[{\cite{K09}}]\label{thm:Gramain}
The differential form $I(X)\in\Omega^1_{\DR}(\K)$ is closed, and its integration over the \emph{Gramain cycle} $G_f$ (see Remark~\ref{rem:Gramain} below) is equal to the \emph{Casson invariant} $v_2(f)$.
\end{thm}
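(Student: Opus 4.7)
The proof splits into two parts: closedness of $I(X)\in\Omega^1_{\DR}(\K)$, and the identification $\int_{G_f}I(X)=v_2(f)$. To establish closedness, I would apply the generalized Stokes theorem for fiber integration to obtain
\[
dI(X)=\sum_{i}a_{i}\,\pi_{X_{i}*}(d\omega_{X_{i}})\;\pm\;\sum_{i}a_{i}\,\pi^{\partial}_{X_{i}*}(\omega_{X_{i}}).
\]
The first sum vanishes because each $\omega_{X_{i}}$ is a wedge of pullbacks of the closed form $\vol$. The second sum decomposes along the codimension-one strata of the compactified fibers, indexed by subsets of vertices that collide or by vertices escaping to infinity. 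The \emph{principal} faces, where exactly two vertices collide, correspond to the contraction of a single edge or arc of some $X_{i}$, and their aggregate contribution assembles into $I(\partial X)$, which vanishes because $\sum_{i}a_{i}X_{i}$ is a graph cocycle. For the \emph{hidden} faces (three or more colliding vertices, or vertices escaping to infinity), the vanishing is established by a dimension count together with the anti-symmetry of $\vol$; the faces involving loops at interval vertices require a small additional argument.

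For the evaluation of $\int_{G_{f}}I(X)$, I would choose $\vol$ supported in small neighborhoods $U_{\pm}$ of the poles of $S^{2}$ as in Example~\ref{ex:I(X)_example}. With this choice, for an almost planar diagram $D$ of $f$ each map $\varphi_{\alpha}$ is supported on configurations where the endpoints of the edge $\alpha$ lie on the over-/under-arcs of a crossing of $D$, so $I(X_{i})(f)$ reduces to a signed count of assignments of crossings of $D$ to the edges of $X_{i}$, weighted by half-integral products of crossing signs. Realizing the Gramain cycle $G_{f}$ by a rotation of $D$ around the long axis, the Gauss diagram of $f$ evolves through a controlled sequence of diagram moves, and $\int_{G_{f}}I(X)$ collapses to a finite combinatorial sum indexed by chord configurations on the Gauss diagrams appearing along the rotation.

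The main obstacle is to identify this combinatorial expression with the Casson invariant $v_{2}(f)$. My plan is in two steps. First, show that $\int_{G_{f}}I(X)$ is a Vassiliev invariant of order at most two by analyzing its variation under crossing changes: the difference across a double point reduces, via the closedness established above and the graph cocycle condition, to boundary contributions on lower-complexity cycles, and iterating this argument three times forces the third variation to vanish. Second, fix the normalization by computing the invariant on the unknot (giving $0$) and on the right-handed trefoil (giving $1$), which uniquely characterizes $v_{2}$. The most delicate ingredient is the hidden-face analysis for the non-trivalent graphs $X_{4},\dots,X_{9}$, whose free vertices make the dimension count only marginally tight; one must carefully exploit the anti-symmetry of $\vol$ and a case analysis of faces where free vertices collide with interval vertices in order to rule out spurious contributions.
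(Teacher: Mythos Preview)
This theorem is not proved in the present paper; it is quoted from \cite{K09}, and the paper uses it only as input (in the last subsection, to determine the constant $c$). So there is no proof here to compare your outline against. A few comments on your proposal nonetheless.

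Your closedness argument is the standard Bott--Taubes scheme, but you underestimate the hidden-face analysis. In three dimensions the dimension count alone is \emph{not} sufficient: there are anomalous faces (all configuration points collapsing to a single point on the knot) whose contribution is the source of the ``correction terms'' the paper explicitly mentions just before the theorem. These are not handled by anti-symmetry and a generic dimension count; one must either show they vanish for this particular $X$ or incorporate an explicit correction, and that is part of what \cite{K09} does.

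Your plan for the evaluation has a conceptual error. The Gramain cycle is a rigid rotation of $f$ about the long axis; it does not induce any sequence of diagram (Reidemeister) moves, and the Gauss diagram of $f$ does not change along it. So the sentence ``the Gauss diagram of $f$ evolves through a controlled sequence of diagram moves'' describes the Fox--Hatcher cycle, not the Gramain cycle, and the combinatorics you sketch does not apply. What actually varies along $G_f$ is the direction data fed into the maps $\varphi_\alpha$: the tangent directions $f'/\lvert f'\rvert$ (which enter via the loop edges of $X_3$ and $X_4$) and the edge directions $\varphi_\alpha$ all rotate about the $x$-axis, sweeping through the support of $\vol$. The computation in \cite{K09} proceeds by this localization, and the finite-type bound comes from an alternating-sum argument analogous to \S\ref{ss:D^3v} of this paper (with $D^3$ replaced by $D^2$), not from ``boundary contributions on lower-complexity cycles'' --- closedness of $I(X)$ plays no direct role in bounding the Vassiliev order.
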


\begin{rem}\label{rem:Gramain}
The \emph{Gramain 1-cycle} $G_f\colon S^1\to\K$ for $f\in\K$ is a cycle that rotates $f$ around the ``long axis'' $\R^1\times\{(0,0)\}$.
Explicitly $G_f$ is given by
\begin{equation}
 G_f(\theta)(x)\coloneqq
 \begin{pmatrix}
  1 &            & \\
    & \cos\theta & \\
    &            & \sin\theta
 \end{pmatrix}
 f(x)
 \quad\text{for}\quad
 \theta\in S^1,\ x\in\R^1.
\end{equation}
Mortier \cite[Theorem~4.1]{Mortier15} has given a $1$-cocycle $\alpha_3^1$ of $\K$ in a combinatorial way and has proved that
\begin{equation}
 \ang{\alpha_3^1,G_f}=v_2(f)
 \quad\text{and}\quad
 \ang{\alpha_3^1,p_*\FH_{(f,w)}}=6v_3(f)-w\cdot v_2(f)
\end{equation}
for $(f,w)\in\K\times\Z\simeq\fK$.
This result motivates us to compute the integration of $I(X)$ over the FH-cycles.
\end{rem}

\section{Integration of $I(X)$ over the Fox-Hatcher cycle}
Recall that $p\colon\fK\to\K$ is the map forgetting the framing of $\wtf$.
For any $\wtf\in\fK$ we define
\begin{equation}\label{eq:def_v}
 v(\wtf)\coloneqq\int_{p_*\FH_{\wtf}}I(X)=\sum_{1\le k\le 9}a_k\int_{p_*\FH_{\wtf}}I(X_k).
\end{equation}
This gives an isotopy invariant $v$ for framed long knots.
Our goal is to describe $v$ as a linear combination of the Vassiliev invariants of order less or equal to three.

\subsection{The invariant $v$ is of order three}
For any $\wtf\in\fK$ and crossings $c_1,\dots,c_n$ of its diagram, define
\begin{equation}\label{eq:derivative}
 D^nv(\wtf)\coloneqq\sum_{\epsilon_1,\dots,\epsilon_n\in\{+1,-1\}}\epsilon_1\dotsb\epsilon_nv(\wtf{}_{\epsilon_1,\dots,\epsilon_n}),
\end{equation}
where $\wtf_{\epsilon_1,\dots,\epsilon_n}$ is a framed long knot obtained by changing, if necessary, the crossings $c_i$ so that its sign is equal to $\epsilon_i$.
What we want to show is $D^4v(\wtf)=0$ for any choice of $\wtf$ and $c_1,\dots,c_4$.

Let $c_1,c_2,c_3$ be (part of the) crossings of a diagram $D$ of $\wtf\in\fK$ respecting the Gauss diagram $G$ (Definition~\ref{def:crossings_respecting_Gauss_diagram}).
Let us perform the FH-moves (described in \S\ref{s:FH_cycle}) on all the crossings $c$ of $D$ and the corresponding newborn crossings $c'$.
The Gauss diagram that the three crossings under consideration respect changes as in Figure~\ref{fig:Gauss_FH} when the FH-move is performed on one of $c_i$ and $c_i'$ ($i=1,2,3$), and in the sequence of the FH-moves realizing the FH-cycle, six Gauss diagrams (some of which may be equal to each other) respected by the three crossings under consideration form a cycle. 
Figures~\ref{fig:Gauss_cycle_1}, \ref{fig:Gauss_cycle_2} and \ref{fig:Gauss_cycle_3} show three such cycles.
\begin{figure}
\centering
\input{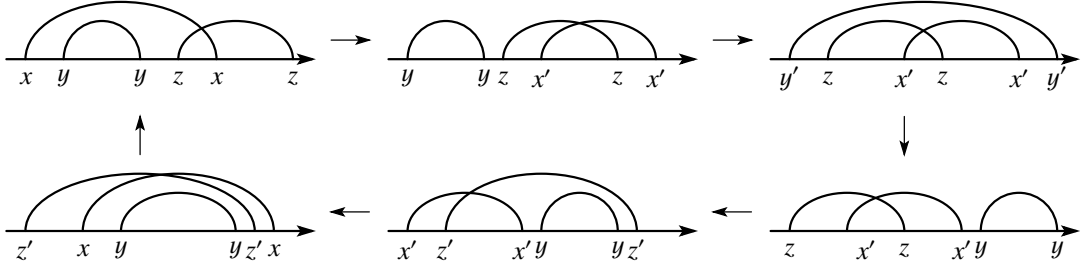}
\caption{Type I cycle of the Gauss diagrams respecting three crossings under consideration; $\{x,y,z\}=\{c_1,c_2,c_3\}$}
\label{fig:Gauss_cycle_1}
\end{figure}
\begin{figure}
\centering
\input{Gauss_cycle_2}
\caption{Type II cycle of the Gauss diagrams respecting three crossings under consideration; $\{x,y,z\}=\{c_1,c_2,c_3\}$}
\label{fig:Gauss_cycle_2}
\end{figure}
\begin{figure}
\centering
\input{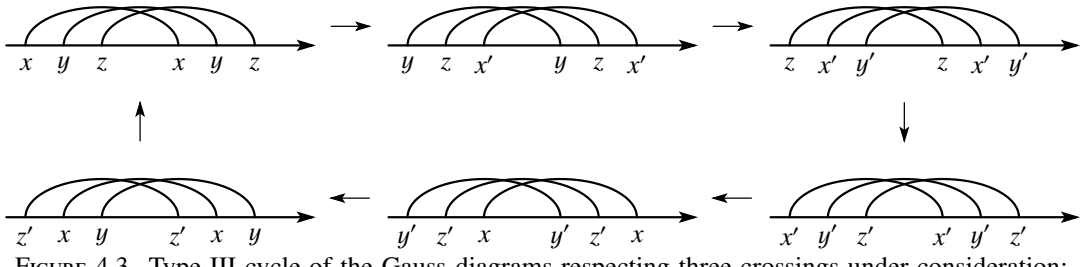}
\caption{Type III cycle of the Gauss diagrams respecting three crossings under consideration; $\{x,y,z\}=\{c_1,c_2,c_3\}$}
\label{fig:Gauss_cycle_3}
\end{figure}
There are 15 Gauss diagrams with three edges, and only 10 of them are included in these three cycles.
The remaining five Gauss diagrams form the other two cycles, that we omit since in fact they do not contribute to our computation in \S\ref{ss:D^3v}.

\begin{thm}\label{thm:D^3v}
$D^3v(\wtf)$ is given by
\begin{equation}
 D^3v(\wtf)=
 \begin{cases}
  -2 & \text{if }c_1,c_2\text{ and }c_3\text{ respect one of the Gauss diagrams in type I cycle},\\
  2  & \text{if }c_1,c_2\text{ and }c_3\text{ respect one of the Gauss diagrams in type II cycle},\\
  6  & \text{if }c_1,c_2\text{ and }c_3\text{ respect the unique Gauss diagram in type III cycle},\\
  0  & \text{otherwise}.
 \end{cases}
\end{equation}
\end{thm}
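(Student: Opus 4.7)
The plan is to compute $D^3v(\wtf)$ by first decomposing the Fox-Hatcher integral into a sum over the elementary FH-moves, then applying the alternating sum $D^3$ to isolate the contributions that genuinely depend on all three crossings $c_1,c_2,c_3$, and finally carrying out a case-by-case enumeration according to the Gauss diagram that these crossings respect.

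First I would write $v(\wtf)=\sum_k a_k v_k(\wtf)$ with $v_k(\wtf)=\int_{p_*\FH_{\wtf}}I(X_k)$, and decompose further as $v_k(\wtf)=\sum_c\int_{\gamma_c}I(X_k)$, where $\gamma_c\colon[0,1]\to\K$ denotes the path in $\K$ realizing the FH-move on a single crossing $c$ encountered along the Fox-Hatcher cycle. Next I would evaluate each elementary integral $\int_{\gamma_c}I(X_k)$ combinatorially, using the localization principle from Example~\ref{ex:I(X)_example}: because $\vol$ is supported near the two poles of $S^2$, an edge of $X_k$ joining two i-vertices contributes only when those vertices are placed on the two branches of a crossing; an edge involving an f-vertex forces the f-vertex near such a crossing; and a loop at an i-vertex contributes only when the tangent vector of $f$ at that point is close to vertical. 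The outcome should express $\int_{\gamma_c}I(X_k)$ as a signed sum over local configurations of crossings of the one-parameter family of knot diagrams traversed by $\gamma_c$.

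I would then apply $D^3$ to the resulting combinatorial formula. By a standard Vassiliev-type argument the alternating sum $\sum\epsilon_1\epsilon_2\epsilon_3$ kills every configuration that does not genuinely depend on the signs of all three distinguished crossings, so only the configurations where $c_1,c_2,c_3$ are simultaneously ``used'' by the graph $X_k$ together with the FH-move crossing $c$ survive; the FH-moves performed on the remaining crossings of the diagram cancel out. This drastic reduction leaves a finite list of possibilities, indexed by which chord of the Gauss diagram is shifted by the FH-move and by how the remaining i- and f-vertices of $X_k$ are distributed among the six endpoints of the three chords. In particular only a restricted subset of $X_1,\dots,X_9$ can contribute nontrivially to $D^3v$, namely those whose vertex counts and edge pattern can be compatibly matched with three prescribed crossings.

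The last step is the explicit case analysis. For each Gauss diagram appearing in the type I, II, III cycles (and separately for the remaining five of the fifteen Gauss diagrams on three chords), I would tabulate the contribution of each graph $X_k$ weighted by the coefficient $a_k$, sum them, and verify that the total equals respectively $-2$, $2$, $6$, or $0$. The main obstacle I foresee is the bookkeeping: one must carefully track the signs coming from the edge orientations of $X_k$, from the pullbacks $\varphi_\alpha^*\vol$, and from the Fox-Hatcher parameter; one must also verify that degenerate boundary configurations (where i-vertices collide or an f-vertex limits onto the knot during the FH-move) either vanish separately or mutually cancel, which is where the antisymmetry of $\vol$ and the graph cocycle condition $\partial X=0$ enter to suppress the potential ``correction terms.'' Once this table of local contributions is assembled and summed with the weights $(a_1,\dots,a_9)=(-2,1,2,-2,2,-1,1,-1,1)$, the four announced values of $D^3v$ drop out of the enumeration.
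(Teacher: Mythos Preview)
Your overall strategy matches the paper's: decompose the FH-cycle into elementary moves, localize using the support of $\vol$ near the poles, apply $D^3$ to discard any configuration not involving all three distinguished crossings (the paper makes this rigorous by taking a limit $h_i\to 0$ of the heights at $c_1,c_2,c_3$), and then tabulate by Gauss diagram type.

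The one place your plan goes astray is the mechanism you anticipate for eliminating the non-contributing graphs. The paper shows that only $X_1$ and $X_2$ survive, and the arguments are not cocycle-theoretic but graph-specific and geometric: $X_7,X_8,X_9$ drop out because they have at most three i-vertices, so one of the four arcs $A_2,B_2,A_3,B_3$ near $c_2,c_3$ is unoccupied and the $h_m\to 0$ cancellation applies; $X_4$ dies because its loop sits at the leftmost i-vertex, whose tangent is horizontal for an almost-planar knot and hence misses $\supp(\vol)$; $X_5,X_6$ require a short case split on whether the f-vertex lies near a crossing; and $X_3$ is killed by an explicit symmetry (the two ``turnaround'' arcs of the moving strand contribute with opposite sign). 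The graph-cocycle relation $\partial X=0$ plays no role in this step. For the surviving $X_1$ and $X_2$, the paper reduces each local integral by a linear change of variables to a product of three Hopf-link-type integrals $\int_{\R^2}\psi^*\vol=\tfrac12$, yielding $\pm\tfrac18,\pm\tfrac14,\tfrac34$, which after summing over the $2^3$ sign choices and weighting by $a_1=-2$, $a_2=1$ give $-2,2,6,0$. If you replace your appeal to the cocycle condition with these concrete vertex-count, loop, and symmetry arguments, your outline becomes the paper's proof.
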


\begin{cor}\label{cor:v_order_3}
The invariant $v$ is a Vassiliev invariant for framed long knots of order exactly three.
\end{cor}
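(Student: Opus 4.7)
The corollary is a short deduction from Theorem~\ref{thm:D^3v}; the task splits into showing (i) $v$ is of order at most three, i.e.\ $D^4v(\wtf)=0$ for every $\wtf\in\fK$ and every choice of four crossings $c_1,\dots,c_4$ on a diagram of $\wtf$, and (ii) $v$ is not of order two or less, i.e.\ $D^3v$ does not vanish identically.

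For part~(i), my plan is to reorganize the alternating sum in the definition \eqref{eq:derivative} as
\begin{equation*}
 D^4v(\wtf)=D^3v(\wtf_{*,+})-D^3v(\wtf_{*,-}),
\end{equation*}
where $\wtf_{*,\pm}$ denotes the framed long knot obtained from $\wtf$ by adjusting the sign of the fourth crossing $c_4$ to $\pm 1$, and the $D^3$ is taken with respect to $c_1,c_2,c_3$. The key observation is that the formula in Theorem~\ref{thm:D^3v} depends only on the \emph{combinatorial type} of the Gauss diagram $G_{c_1,c_2,c_3}$ respected by the three fixed crossings, as in Definition~\ref{def:crossings_respecting_Gauss_diagram}. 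Since flipping the crossing $c_4$ does not alter the positions $p_i,q_i\in\R^1$ of the endpoints of $c_1,c_2,c_3$, it does not alter $G_{c_1,c_2,c_3}$. Hence $D^3v(\wtf_{*,+})$ and $D^3v(\wtf_{*,-})$ are equal, and the difference vanishes. This verifies that $v$ is Vassiliev of order at most three for framed long knots.

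For part~(ii), my plan is simply to exhibit one framed long knot on which $D^3v\ne 0$. Theorem~\ref{thm:D^3v} provides an abundance of such examples: any $\wtf$ whose diagram contains three crossings $c_1,c_2,c_3$ respecting a Gauss diagram appearing in type~I, II, or III yields $D^3v(\wtf)\in\{-2,2,6\}\setminus\{0\}$. For a concrete witness I would take the right-handed trefoil $3_1^+$ with its standard three-crossing diagram; its three crossings respect the type~III diagram of Figure~\ref{fig:Gauss_cycle_3}, giving $D^3v(\wtf)=6\ne 0$. Thus $v$ is of order exactly three.

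There is no real obstacle once Theorem~\ref{thm:D^3v} is in hand; the only subtlety worth double-checking is that $D^nv$ is a well-defined invariant of the singular knot underlying $\wtf$ with its distinguished $n$ double points (so that the notions ``order $\le n$'' and ``$D^{n+1}v\equiv 0$'' agree in the framed setting), but this is standard for Vassiliev invariants and follows from the isotopy invariance of $v$ noted immediately after \eqref{eq:def_v}.
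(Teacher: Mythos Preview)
Your proof is correct and follows essentially the same approach as the paper's own argument: both split $D^4v(\wtf)$ as $D^3v(\wtf_+)-D^3v(\wtf_-)$ with respect to the fourth crossing, observe that the Gauss diagram respected by $c_1,c_2,c_3$ is unchanged under flipping $c_4$, and invoke Theorem~\ref{thm:D^3v} to conclude both the vanishing of $D^4v$ and the nonvanishing of $D^3v$. Your additional concrete witness ($3_1^+$ in type~III) and the remark on well-definedness of $D^nv$ go slightly beyond what the paper spells out but are in the same spirit.
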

\begin{proof}
Let $c_1,\dots,c_4$ be crossings of a diagram of $\wtf\in\fK$.
Let $\wtf_{\pm}$ be knots obtained by changing $c_4$ so that its sign is respectively $\pm 1$.
Then by definition
\begin{equation}
 D^4v(f)=D^3v(f_+)-D^3v(f_-).
\end{equation}
Moreover $c_1,c_2$ and $c_3$ of $f_+$ and $f_-$ respect the same Gauss diagram.
Thus we have $D^3v(\wtf_+)=D^3v(\wtf_-)$ by Theorem~\ref{thm:D^3v}, concluding $D^4v(\wtf)=0$.

Theorem~\ref{thm:D^3v} also says that $D^3v(\wtf)$ can be nonzero, and $v$ is not of order two nor less.
\end{proof}

The next subsection is devoted to the proof of Theorem~\ref{thm:D^3v}.

\subsection{Computation of $D^3v$}\label{ss:D^3v}
As in Example~\ref{ex:I(X)_example}, we assume that
\begin{itemize}
\item
	$\vol\in\Omega^2_{\DR}(S^2)$ is an anti-symmetric unit volume form of $S^2$ whose support is contained in small neighborhoods of poles $(0,0,\pm 1)\in S^2$, and
\item
	we compute $D^3v(\wtf)$ after transforming $\wtf$ to be ``almost planar.''
\end{itemize}
For $k=1,\dots,9$, consider the pullback square
\begin{equation}
\begin{split}
 \xymatrix{
 (p\circ\FH_{\wtf})^*E_{X_k}\ar@{->}[rr]^-{\overline{p\circ\FH_{\wtf}}}\ar@{->}[d]_-{\pi'_{X_k}} & & E_{X_k}\ar@{->}[d]^-{\pi_{X_k}}\\
 S^1\ar@{->}[r]^-{\FH_{\wtf}} & \fK\ar@{->}[r]^-p & \K
 }
\end{split}
\end{equation}
Then
\begin{equation}\label{eq:integral_pullback}
 \int_{p_*\FH_{\wtf}}I(X_k)
 =\int_{S^1}(p\circ\FH_{\wtf})^*\pi_{X_k*}\omega_{X_k}
 =\int_{S^1}\pi'_{X_k*}\overline{p\circ\FH_{\wtf}}{}^*\omega_{X_k}
 =\int_{(p\circ\FH_{\wtf})^*E_{X_k}}\overline{p\circ\FH_{\wtf}}{}^*\omega_{X_k}.
\end{equation}
Note that $(p\circ\FH_{\wtf})^*E_{X_k}$ is explicitly given by
\begin{equation}
 (p\circ\FH_{\wtf})^*E_{X_k}
 =\left\{
  (p(\FH_{\wtf}(\theta)),y)\in\K\times\Conf_{\vi+\vf}(\R^3)\mathrel{}\middle|\mathrel{}
 \begin{array}{l}
 \theta\in S^1,\ y_i=p(\FH_{\wtf}(\theta))(x_i)\\
 \text{for some }x_i\in\R^1,\ 1\le i\le\vi
 \end{array}
 \right\}\subset E_{X_k}.
\end{equation}
Suppose a diagram $D$ of $\wtf$ has $n$ crossings.
Then $\FH_{\wtf}$ can be realized on knot diagram by the sequence of $2n$ FH-moves on $c$ or $c'$, where $c$ is one of the crossings of $D$ and $c'$ is a newly created crossing after the FH-move on $c$.
We can decompose $S^1$ into $2n$ intervals
\begin{equation}\label{eq:decomposition_S^1}
 S^1=\bigcup_c(I_c\cup I_{c'})
\end{equation}
such that $\FH_{\wtf}$ restricted on ${I_c}$ (resp.~$I_{c'}$) realizes the FH-move on $c$ (resp.~$c'$).

\begin{defn}
Under the above setting, define
\begin{equation}
 E_{k;c,c'}\coloneqq\{(p_*(\FH_{\wtf}(\theta)),y)\in(p\circ H_{\wtf})^*E_{X_k}\mid \theta\in I_c\cup I_{c'}\}.
\end{equation}
\end{defn}

By definition we have
\begin{equation}\label{eq:E_X_k_decomposes_into_E_c}
 (p\circ\FH_{\wtf})^*E_{X_k}=\bigcup_cE_{k;c,c'}
 \quad\text{and hence}\quad
 \int_{(p\circ\FH_{\wtf})^*E_X}\omega_{X_k}=\sum_c\int_{E_{k;c,c'}}\overline{p\circ\FH_{\wtf}}{}^*\omega_{X_k}.
\end{equation}
Combining
\eqref{eq:def_v},
\eqref{eq:derivative},
\eqref{eq:integral_pullback} and
\eqref{eq:E_X_k_decomposes_into_E_c},
we have
\begin{equation}
 D^3v(\wtf)=\sum_{1\le k\le 9}a_k\sum_c\sum_{\epsilon_1,\epsilon_2,\epsilon_3\in\{+1,-1\}}\epsilon_1\epsilon_2\epsilon_3\int_{E_{k;c,c'}}\overline{p\circ\FH_{\wtf{}_{\epsilon_1,\epsilon_2,\epsilon_3}}}{}^*\omega_{X_k}.
\end{equation}


\subsubsection{Eliminating $X_3,\dots,X_9$}
Let $h_i$ ($i=1,2,3$) be the ``distance'' between two arcs at $c_i$, $i=1,2,3$ (Figure~\ref{fig:h_i}).
\begin{figure}
\centering
{\unitlength 0.1in%
\begin{picture}(10.0000,6.3000)(3.0000,-8.0000)%
\special{pn 13}%
\special{pa 500 600}%
\special{pa 505 600}%
\special{pa 515 598}%
\special{pa 520 596}%
\special{pa 535 587}%
\special{pa 540 583}%
\special{pa 555 568}%
\special{pa 560 562}%
\special{pa 565 555}%
\special{pa 570 549}%
\special{pa 575 541}%
\special{pa 580 534}%
\special{pa 590 518}%
\special{pa 605 491}%
\special{pa 610 481}%
\special{pa 615 472}%
\special{pa 630 442}%
\special{pa 635 431}%
\special{pa 640 421}%
\special{pa 645 410}%
\special{pa 655 390}%
\special{pa 660 379}%
\special{pa 665 369}%
\special{pa 670 358}%
\special{pa 685 328}%
\special{pa 690 319}%
\special{pa 695 309}%
\special{pa 710 282}%
\special{pa 720 266}%
\special{pa 725 259}%
\special{pa 730 251}%
\special{pa 735 245}%
\special{pa 740 238}%
\special{pa 745 232}%
\special{pa 760 217}%
\special{pa 765 213}%
\special{pa 780 204}%
\special{pa 785 202}%
\special{pa 795 200}%
\special{pa 805 200}%
\special{pa 815 202}%
\special{pa 820 204}%
\special{pa 835 213}%
\special{pa 840 217}%
\special{pa 855 232}%
\special{pa 860 238}%
\special{pa 865 245}%
\special{pa 870 251}%
\special{pa 875 259}%
\special{pa 880 266}%
\special{pa 890 282}%
\special{pa 905 309}%
\special{pa 910 319}%
\special{pa 915 328}%
\special{pa 930 358}%
\special{pa 935 369}%
\special{pa 940 379}%
\special{pa 945 390}%
\special{pa 955 410}%
\special{pa 960 421}%
\special{pa 965 431}%
\special{pa 970 442}%
\special{pa 985 472}%
\special{pa 990 481}%
\special{pa 995 491}%
\special{pa 1010 518}%
\special{pa 1020 534}%
\special{pa 1025 541}%
\special{pa 1030 549}%
\special{pa 1035 555}%
\special{pa 1040 562}%
\special{pa 1045 568}%
\special{pa 1060 583}%
\special{pa 1065 587}%
\special{pa 1080 596}%
\special{pa 1085 598}%
\special{pa 1095 600}%
\special{pa 1100 600}%
\special{fp}%
%
\special{pn 13}%
\special{pa 300 600}%
\special{pa 500 600}%
\special{fp}%
%
\special{pn 13}%
\special{pa 1100 600}%
\special{pa 1300 600}%
\special{fp}%
%
\special{pn 13}%
\special{pa 650 450}%
\special{pa 1000 800}%
\special{fp}%
\special{sh 1}%
\special{pa 1000 800}%
\special{pa 967 739}%
\special{pa 962 762}%
\special{pa 939 767}%
\special{pa 1000 800}%
\special{fp}%
%
\special{pn 13}%
\special{pa 400 200}%
\special{pa 600 400}%
\special{fp}%
%
\special{pn 8}%
\special{pa 800 600}%
\special{pa 800 200}%
\special{fp}%
\special{sh 1}%
\special{pa 800 200}%
\special{pa 780 267}%
\special{pa 800 253}%
\special{pa 820 267}%
\special{pa 800 200}%
\special{fp}%
%
\special{pn 8}%
\special{pa 800 200}%
\special{pa 800 600}%
\special{fp}%
\special{sh 1}%
\special{pa 800 600}%
\special{pa 820 533}%
\special{pa 800 547}%
\special{pa 780 533}%
\special{pa 800 600}%
\special{fp}%
\put(11.0000,-3.0000){\makebox(0,0)[lb]{$h_i$}}%
%
\special{pn 8}%
\special{pa 1100 300}%
\special{pa 800 400}%
\special{dt 0.030}%
\put(7.7500,-6.2500){\makebox(0,0)[rt]{$c_i$}}%
\end{picture}}%
\caption{$h_i$ ($i=1,2,3$)}
\label{fig:h_i}
\end{figure}
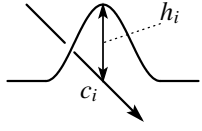
We may compute $D^3v(\wtf)$ in the limit $h_i\to 0$ ($i=1,2,3$) since $v$ is an invariant.
In this limit, only the graphs $X_1$ and $X_2$ essentially contribute to $D^3v(\wtf)$;

\begin{prop}\label{prop:cancellation}
\begin{enumerate}[(1)]
\item
	For $k=1,\dots,9$ and any crossing $c$ other than $c_1,c_2,c_3$, we have
	\begin{equation}\label{eq:cancellation}
	 \lim_{h_1,h_2,h_3\to 0}\sum_{\epsilon_1,\epsilon_2,\epsilon_3\in\{+1,-1\}}\epsilon_1\epsilon_2\epsilon_3\int_{E_{k;c,c'}}\overline{p\circ\FH_{\wtf{}_{\epsilon_1,\epsilon_2,\epsilon_3}}}{}^*\omega_{X_k}=0.
	\end{equation}
\item
	If $k=3,\dots,9$, then the equation \eqref{eq:cancellation} also holds for $c\in\{c_1,c_2,c_3\}$.
\end{enumerate}
Consequently
\begin{equation}\label{eq:X_1_and_X_2_determine_D^3v}
 D^3v(\wtf)=\lim_{h_1,h_2,h_3\to 0}\sum_{k=1,2}a_k\sum_{c\in\{c_1,c_2,c_3\}}\sum_{\epsilon_1,\epsilon_2,\epsilon_3\in\{+1,-1\}}\epsilon_1\epsilon_2\epsilon_3\int_{E_{k;c,c'}}\overline{p\circ\FH_{\wtf{}_{\epsilon_1,\epsilon_2,\epsilon_3}}}{}^*\omega_{X_k}.
\end{equation}
\end{prop}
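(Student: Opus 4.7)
The plan is to exploit two features of $\omega_{X_k}$: its \emph{localization} to configurations where each edge $\alpha$ has $\varphi_\alpha \in U_+ \cup U_-$ (since $\supp(\vol)$ lies near the two poles of $S^2$), and the resulting \emph{multilinearity} in $(\epsilon_1,\epsilon_2,\epsilon_3)$ of the terms in \eqref{eq:cancellation}. As in Example~\ref{ex:I(X)_example}, an edge between two i-vertices contributes only when its endpoints $f(x_p),f(x_q)$ are nearly vertically aligned, which in the almost planar regime forces $\{x_p,x_q\}$ to be the two preimages of some crossing of the diagram, with a sign determined by whether the upper point lies on the over- or under-arc.

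First I would argue that the sign $\epsilon_i$ affects the value of $\overline{p\circ\FH_{\wtf{}_{\epsilon_1,\epsilon_2,\epsilon_3}}}{}^*\omega_{X_k}$ only through edges whose both endpoints are pinned at $c_i$: the vertical direction then lies in $U_+$ versus $U_-$ according to the crossing sign, while every other contribution is insensitive to $\epsilon_i$. Consequently, the trilinear alternating sum in \eqref{eq:cancellation} collects, in the limit $h_1,h_2,h_3\to 0$, only from configurations in which at least one edge of $X_k$ is \emph{simultaneously} pinned at each of $c_1,c_2,c_3$. This consumes at least three edges and six i-vertex slots from $X_k$.

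For part (1), with $c\notin\{c_1,c_2,c_3\}$, the FH-move on $c$ or $c'$ leaves $c_1,c_2,c_3$ unaffected; the arc of $\wtf$ near these three crossings is disjoint from the sweeping arc. The extra $\theta$-integration over $I_c\cup I_{c'}$ has to be saturated by a $d\theta$-component in $\omega_{X_k}$ arising from an edge caught by the sweeping motion. I would then inspect, for each $k\in\{1,\ldots,9\}$, the i-vertex/edge budget remaining after the three pinnings: in every case either the remaining subgraph has no edge available to produce the $d\theta$-factor, or the remaining i-vertices are ordered along $\R^1$ in a way that forces the relevant fiber to have positive codimension, so that the pulled-back volume form vanishes after integration. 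The anti-symmetry $i^*\vol = -\vol$ supplies the sign cancellation in the remaining limiting configurations.

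For part (2), with $c=c_j$ for some $j\in\{1,2,3\}$ and $k\in\{3,\ldots,9\}$, the moving crossing doubles as one of the pinned crossings. Here the graphs $X_3,\ldots,X_9$ each carry at least one f-vertex (and $X_9$ carries a loop); the incident edges impose additional vertical-alignment constraints at points not forced onto the diagram, and a case-by-case check against Figure~\ref{fig:Nontrivalent_graph} shows that these constraints together with the simultaneous pinnings at $c_1,c_2,c_3$ and the $d\theta$ requirement overdetermine the fiber, again yielding a vanishing limit. The main obstacle, I expect, is executing the case-by-case graph analysis cleanly: one must identify in each of $X_3,\ldots,X_9$ both the antipodal involution on $S^2$ (or the reparametrization of the f-vertices) that produces the cancellation and the dimension count that forces the surviving integrand to be of lower order in $(h_1,h_2,h_3)$. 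Combining parts (1) and (2) immediately gives \eqref{eq:X_1_and_X_2_determine_D^3v} via \eqref{eq:E_X_k_decomposes_into_E_c}.
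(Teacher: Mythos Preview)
Your outline captures the right localization philosophy, and the pigeonhole flavor you sketch for part~(1) is close to the paper's argument: since each $X_k$ has at most five i-vertices but there are six arcs $A_1,B_1,A_2,B_2,A_3,B_3$ near the three crossings, some arc is empty of i-vertex points, and on the corresponding subspace the integrand is insensitive to that $\epsilon_m$ in the limit $h_m\to 0$. The paper phrases this as a covering of $E_{k;c,c'}$ by subspaces $E_{k;c,c',A_m},E_{k;c,c',B_m}$, which is cleaner than your edge-pinning language; note that what matters is where the i-vertex \emph{points} fall, not whether a whole edge is ``pinned''.

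However, part~(2) has a genuine gap. Your plan rests on the claim that $X_3,\dots,X_9$ ``each carry at least one f-vertex'', but this is false for $X_3$: it has five i-vertices and a \emph{loop} at vertex~$1$, and no f-vertex at all. (It is $X_3$ and $X_4$ that carry loops, not $X_9$.) For $k=3$ with $c=c_1$, after accounting for the loop and placing $y_2,\dots,y_5$ one in each of $A_2,B_2,A_3,B_3$, there remains a genuinely nonvanishing contribution from the subspace where $y_1$ sits on the moving arc $C$; this is \emph{not} killed by a dimension count or by the support condition on $\vol$. The paper disposes of it by a concrete geometric symmetry: the moving arc passes through two short intervals $J_1,J_2$ (one on each side of the crossing) where the tangent direction can hit $\supp(\vol)$, and a reflection $\xi\colon J_1\to J_2$ together with $\eta(x,y,z)=(-x,y,-z)$ intertwines the two loop maps $\phi_1$ while reversing orientation, so the two contributions cancel. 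Your proposal does not anticipate this mechanism, and without it the case $k=3$ does not close. Similarly, for $k=4$ the vanishing comes from the loop at vertex~$1$ (the unit tangent of an almost planar knot avoids $\supp(\vol)$), not from f-vertex overdetermination; and for $k=5,6$ the paper needs a Type~I/Type~II split of the f-vertex position, where Type~II vanishes by the support condition and Type~I by the absence of an edge between the two i-vertices at one crossing. These ingredients are missing from your sketch.
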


\begin{proof}[{Proof of Proposition~\ref{prop:cancellation} (1)}]
Let $-1<p_i<q_i<1$ ($i=1,2,3$) with $p_1<p_2<p_3$ be the real numbers such that $f(p_i)$ and $f(q_i)$ correspond to $c_i$, and let $A_i$, $B_i$ be small open intervals that include respectively $p_i$ and $q_i$ (see Figure~\ref{fig:cancellation_1}).
Let $E_{k;c,c',A_1}\subset E_{k;c,c'}$ be the subspace consisting of $(\theta,y)$ with no $y_j$ ($1\le j\le\vi$) being in $A_1$.
\begin{figure}
\centering
\input{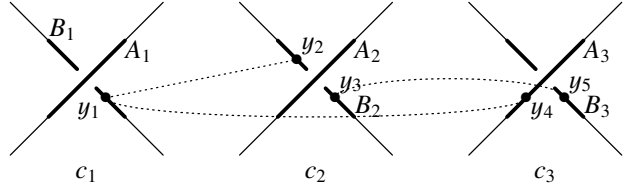}
\caption{An element of $E_{1;c_1,c_1',A_1}$}
\label{fig:cancellation_1}
\end{figure}
Then even if we set $h_1=0$, any two points $y_j$ and $y_{j'}$ corresponding to endpoints of a single edge of $X_k$ do not collide in $E_{k;c,c',A_1}$, and the maps $\varphi_{\alpha}$ and hence the integrand $\omega_{X_k}$ can be defined on $E_{k;c,c',A_1}$.
This implies
\begin{equation}\label{eq:limit_is_zero_1}
\lim_{h_1\to 0}\left(\int_{E_{k;c,c',A_1}}\overline{p\circ\FH_{\wtf_{+1,\epsilon_2,\epsilon_3}}}{}^*\omega_{X_k}-\int_{E_{k;c,c',A_1}}\overline{p\circ\FH_{\wtf_{-1,\epsilon_2,\epsilon_3}}}{}^*\omega_{X_k}\right)=0.
\end{equation}
If we analogously define $E_{k;c,c',A_m}$ and $E_{k;c,c',B_m}$, then similar cancellation to \eqref{eq:limit_is_zero_1} occurs for them.
Moreover we have
\begin{equation}
 \bigcup_{m=1,2,3}(E_{k;c,c',A_m}\cup E_{k;c,c',B_m})=E_{k;c,c'}
\end{equation}
because no $X_k$ has six or more i-vertices.
Although $A_1,\dots,B_3$ are not disjoint, we can arrange them to be disjoint by considering their difference sets and intersections (on which the same argument is valid).
Thus we have \eqref{eq:cancellation}.
\end{proof}

\begin{proof}[{Proof of Proposition~\ref{prop:cancellation} (2) for $k=7,8,9$}]
It is enough to consider the case $c=c_1$; the cases $c=c_2,c_3$ can be proved similarly.

The similar argument in the proof of (1) also implies \eqref{eq:limit_is_zero_1} with $A_1$ and $h_1$ replaced respectively by $A_m$ (or $B_m$) and $h_m$, $m=2,3$.
We thus complete the proof, because $X_k$ ($k=7,8,9$) has three or less i-vertices and we have
\begin{equation}
 E_{k;c_1,c_1'}=\bigcup_{m=2,3}(E_{k;c_1,c'_1,A_m}\cup E_{k;c_1,c'_1,B_m}).\qedhere
\end{equation}
\end{proof}

\begin{proof}[{Proof of Proposition~\ref{prop:cancellation} (2) for $k=5,6$}]
It is enough to consider the case $c=c_1$.

Let $E\subset E_{k;c_1,c_1'}$ be the subspace of $E_{k;c_1,c_1'}$ consisting of $(\theta,y)$ with each of $A_2,B_2,A_3$ and $B_3$ containing at least one $y_j$ corresponding to an i-vertex $j$ of $X_k$.
Then the integrations in \eqref{eq:cancellation} with $E_{k;c_1,c_1'}$ replaced by $E_{k;c_1,c_1'}\setminus E$ are defined even if we set $h_m=0$ for at least one $m\in\{2,3\}$, and the cancellation similar to \eqref{eq:limit_is_zero_1} occurs, similarly as the above proof of (2) for $k=7,8,9$.
Thus it suffices to show \eqref{eq:cancellation} with $E_{k;c_1,c_1'}$ replaced by $E$.
Since $X_k$ ($k=5,6$) has four i-vertices, each of $A_2,B_2,A_3$ and $B_3$ contains exactly one point on $E$.
We divide $E$ into two subspaces;
\begin{itemize}
\item[\emph{Type I};]
	the subspace $E_I$ of $E$ consisting of $(\theta,y)$ with $y_5\in\R^3$ outside neighborhoods of $c_2$ and $c_3$.
	Then two i-vertices ($4$ and $5$ in the case of Figure~\ref{fig:cancellation_2}) corresponding to the points in $A_m\cup B_m$ are not joined by any edge, for at least one $m=2,3$.
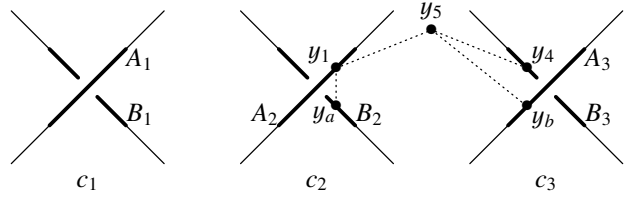
\begin{figure}
\centering
{\unitlength 0.1in%
\begin{picture}(32.0000,9.0000)(4.0000,-10.2700)%
%
\special{pn 8}%
\special{pa 400 1000}%
\special{pa 1200 200}%
\special{fp}%
%
\special{pn 8}%
\special{pa 1600 1000}%
\special{pa 2400 200}%
\special{fp}%
%
\special{pn 8}%
\special{pa 2800 1000}%
\special{pa 3600 200}%
\special{fp}%
%
\special{pn 8}%
\special{pa 400 200}%
\special{pa 750 550}%
\special{fp}%
%
\special{pn 8}%
\special{pa 850 650}%
\special{pa 1200 1000}%
\special{fp}%
%
\special{pn 8}%
\special{pa 2050 650}%
\special{pa 2400 1000}%
\special{fp}%
%
\special{pn 8}%
\special{pa 3250 650}%
\special{pa 3600 1000}%
\special{fp}%
%
\special{pn 8}%
\special{pa 1600 200}%
\special{pa 1950 550}%
\special{fp}%
%
\special{pn 8}%
\special{pa 2800 200}%
\special{pa 3150 550}%
\special{fp}%
\put(8.0000,-11.0000){\makebox(0,0){$c_1$}}%
\put(20.0000,-11.0000){\makebox(0,0){$c_2$}}%
\put(32.0000,-11.0000){\makebox(0,0){$c_3$}}%
%
\special{pn 20}%
\special{pa 600 800}%
\special{pa 1000 400}%
\special{fp}%
%
\special{pn 20}%
\special{pa 1800 800}%
\special{pa 2200 400}%
\special{fp}%
%
\special{pn 20}%
\special{pa 3000 800}%
\special{pa 3400 400}%
\special{fp}%
%
\special{pn 20}%
\special{pa 600 400}%
\special{pa 750 550}%
\special{fp}%
%
\special{pn 20}%
\special{pa 1800 400}%
\special{pa 1950 550}%
\special{fp}%
%
\special{pn 20}%
\special{pa 3000 400}%
\special{pa 3150 550}%
\special{fp}%
%
\special{pn 20}%
\special{pa 850 650}%
\special{pa 1000 800}%
\special{fp}%
%
\special{pn 20}%
\special{pa 2050 650}%
\special{pa 2200 800}%
\special{fp}%
%
\special{pn 20}%
\special{pa 3250 650}%
\special{pa 3400 800}%
\special{fp}%
\put(10.0000,-4.0000){\makebox(0,0)[lt]{$A_1$}}%
\put(10.0000,-8.0000){\makebox(0,0)[lb]{$B_1$}}%
\put(18.0000,-8.0000){\makebox(0,0)[rb]{$A_2$}}%
\put(34.0000,-4.0000){\makebox(0,0)[lt]{$A_3$}}%
\put(22.0000,-8.0000){\makebox(0,0)[lb]{$B_2$}}%
\put(34.0000,-8.0000){\makebox(0,0)[lb]{$B_3$}}%
\put(26.0000,-3.0000){\makebox(0,0){$\bullet$}}%
\put(21.0000,-5.0000){\makebox(0,0){$\bullet$}}%
\put(21.0000,-7.0000){\makebox(0,0){$\bullet$}}%
\put(31.0000,-5.0000){\makebox(0,0){$\bullet$}}%
\put(31.0000,-7.0000){\makebox(0,0){$\bullet$}}%
\put(21.0000,-7.0000){\makebox(0,0)[rt]{$y_a$}}%
\put(20.7500,-4.7500){\makebox(0,0)[rb]{$y_1$}}%
\put(26.0000,-2.0000){\makebox(0,0){$y_5$}}%
\put(31.2500,-7.2500){\makebox(0,0)[lt]{$y_b$}}%
\put(31.2500,-4.7500){\makebox(0,0)[lb]{$y_4$}}%
%
\special{pn 8}%
\special{pa 2600 300}%
\special{pa 3100 700}%
\special{dt 0.030}%
%
\special{pn 8}%
\special{pa 2100 500}%
\special{pa 2600 300}%
\special{dt 0.030}%
%
\special{pn 8}%
\special{pa 2100 500}%
\special{pa 2100 700}%
\special{dt 0.030}%
%
\special{pn 8}%
\special{pa 3100 500}%
\special{pa 2600 300}%
\special{dt 0.030}%
\end{picture}}%
\caption{Proposition~\ref{prop:cancellation} (2) for $k=5,6$, Type I subspace ($m=3$, $\{a,b\}=\{2,3\}$); one of the arcs $A_1$ and $B_1$ moves in the FH-move on $c_1$.}
\label{fig:cancellation_2}
\end{figure}
	Even if we set $h_m=0$ and these two points may collide, all the maps $\varphi_{\alpha}$ and hence $\omega_{X_k}$ are still defined on $E_I$, and the cancellation similar to \eqref{eq:limit_is_zero_1} occurs.
\item[\emph{Type II};]
	the subspace $E_{II}$ of $E$ consisting of $(\theta,y)$ with $y_5\in\R^3$ in a neighborhoods of $c_m$, $m\in\{2,3\}$ (see Figure~\ref{fig:cancellation_3}; setting $h_2=0$ or $h_3=0$ are problematic on this subspace).
\begin{figure}
\centering
{\unitlength 0.1in%
\begin{picture}(32.0000,8.2700)(4.0000,-10.2700)%
%
\special{pn 8}%
\special{pa 400 1000}%
\special{pa 1200 200}%
\special{fp}%
%
\special{pn 8}%
\special{pa 1600 1000}%
\special{pa 2400 200}%
\special{fp}%
%
\special{pn 8}%
\special{pa 2800 1000}%
\special{pa 3600 200}%
\special{fp}%
%
\special{pn 8}%
\special{pa 400 200}%
\special{pa 750 550}%
\special{fp}%
%
\special{pn 8}%
\special{pa 850 650}%
\special{pa 1200 1000}%
\special{fp}%
%
\special{pn 8}%
\special{pa 2050 650}%
\special{pa 2400 1000}%
\special{fp}%
%
\special{pn 8}%
\special{pa 3250 650}%
\special{pa 3600 1000}%
\special{fp}%
%
\special{pn 8}%
\special{pa 1600 200}%
\special{pa 1950 550}%
\special{fp}%
%
\special{pn 8}%
\special{pa 2800 200}%
\special{pa 3150 550}%
\special{fp}%
\put(8.0000,-11.0000){\makebox(0,0){$c_1$}}%
\put(20.0000,-11.0000){\makebox(0,0){$c_2$}}%
\put(32.0000,-11.0000){\makebox(0,0){$c_3$}}%
%
\special{pn 20}%
\special{pa 600 800}%
\special{pa 1000 400}%
\special{fp}%
%
\special{pn 20}%
\special{pa 1800 800}%
\special{pa 2200 400}%
\special{fp}%
%
\special{pn 20}%
\special{pa 3000 800}%
\special{pa 3400 400}%
\special{fp}%
%
\special{pn 20}%
\special{pa 600 400}%
\special{pa 750 550}%
\special{fp}%
%
\special{pn 20}%
\special{pa 1800 400}%
\special{pa 1950 550}%
\special{fp}%
%
\special{pn 20}%
\special{pa 3000 400}%
\special{pa 3150 550}%
\special{fp}%
%
\special{pn 20}%
\special{pa 850 650}%
\special{pa 1000 800}%
\special{fp}%
%
\special{pn 20}%
\special{pa 2050 650}%
\special{pa 2200 800}%
\special{fp}%
%
\special{pn 20}%
\special{pa 3250 650}%
\special{pa 3400 800}%
\special{fp}%
\put(10.0000,-4.0000){\makebox(0,0)[lt]{$A_1$}}%
\put(10.0000,-8.0000){\makebox(0,0)[lb]{$B_1$}}%
\put(18.0000,-8.0000){\makebox(0,0)[rb]{$A_2$}}%
\put(34.0000,-4.0000){\makebox(0,0)[lt]{$A_3$}}%
\put(22.0000,-8.0000){\makebox(0,0)[lb]{$B_2$}}%
\put(34.0000,-8.0000){\makebox(0,0)[lb]{$B_3$}}%
\put(30.0000,-6.0000){\makebox(0,0){$\bullet$}}%
\put(21.0000,-5.0000){\makebox(0,0){$\bullet$}}%
\put(21.0000,-7.0000){\makebox(0,0){$\bullet$}}%
\put(31.0000,-5.0000){\makebox(0,0){$\bullet$}}%
\put(31.0000,-7.0000){\makebox(0,0){$\bullet$}}%
\put(21.0000,-7.0000){\makebox(0,0)[rt]{$y_a$}}%
\put(20.2500,-4.7500){\makebox(0,0)[rb]{$y_1$}}%
\put(27.0000,-4.0000){\makebox(0,0)[rb]{$y_5$}}%
\put(31.2500,-7.2500){\makebox(0,0)[lt]{$y_b$}}%
\put(31.2500,-4.7500){\makebox(0,0)[lb]{$y_4$}}%
%
\special{pn 8}%
\special{pa 2100 500}%
\special{pa 3000 600}%
\special{dt 0.030}%
%
\special{pn 8}%
\special{pa 2100 500}%
\special{pa 2100 700}%
\special{dt 0.030}%
%
\special{pn 8}%
\special{pa 3100 700}%
\special{pa 3000 600}%
\special{dt 0.030}%
%
\special{pn 8}%
\special{pa 3100 500}%
\special{pa 3000 600}%
\special{dt 0.030}%
%
\special{pn 8}%
\special{pa 2700 400}%
\special{pa 2940 560}%
\special{fp}%
\special{sh 1}%
\special{pa 2940 560}%
\special{pa 2896 506}%
\special{pa 2896 530}%
\special{pa 2873 540}%
\special{pa 2940 560}%
\special{fp}%
\end{picture}}%
\caption{Proposition~\ref{prop:cancellation} (2) for $k=5,6$, Type II subspace ($m=3$, $\{a,b\}=\{2,3\}$)}
\label{fig:cancellation_3}
\end{figure}
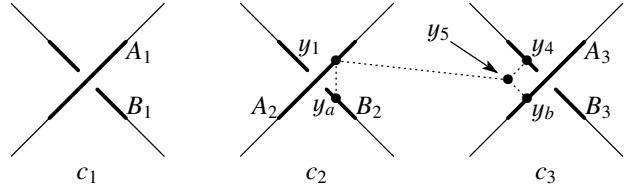
	On $E_{II}$ at least one edge $\alpha$ of $X_k$ joins the vertex $5$ and $j$ with the corresponding point $y_j$ not on $A_m\cup B_m$ ($j=1$ in the case of Figure~\ref{fig:cancellation_3}).
	Then the image of $\varphi_{\alpha}$ is not included in $\supp(\vol)$ and hence $\varphi^*_{\alpha}\vol=0$, because $\supp(\vol)$ is assumed to be in neighborhoods of $(0,0,\pm 1)\in S^2$ and our $\wtf$ is almost planar.
	The integrand $\omega_{X_k}$ is therefore zero on $E_{II}$.\qedhere
\end{itemize}
\end{proof}

\begin{proof}[{Proof of Proposition~\ref{prop:cancellation} (2) for $k=4$}]
Consider the case $c=c_1$ (the same arguments are valid for $c=c_2,c_3$).
Let $E\subset E_{4;c_1,c_1'}$ be the subspace consisting of $(\theta,y)$ with each of $A_2,B_2,A_3$ and $B_3$ contains at least one point.
It is then enough to show \eqref{eq:cancellation} with $E_{4;c_1,c_1'}$ replaced by $E$, as in the above proofs.

As $X_4$ has four i-vertices, each of $A_2,B_2,A_3$ and $B_3$ contains exactly one point on $E$.
In particular $y_1\in A_2$, and the map $\varphi_{\alpha}$ for the loop $\alpha$ at the vertex $1$ has the image outside $\supp(\vol)$ by our assumption on $\wtf$ and $\vol$, and hence $\omega_{X_4}$ vanishes on $E$.
\end{proof}

\begin{proof}[{Proof of Proposition~\ref{prop:cancellation} (2) for $k=3$}]
Again consider the case $c=c_1$.
Let $E\subset E_{3;c_1,c_1'}$ be the subspace  consisting of $(\theta,y)$ satisfying both (i) and (ii);
\begin{enumerate}[(i)]
\item
	$y_1$ is on the arc $C$ that moves in the FH-moves on $c_1$,
\item
	each of $A_2,B_2,A_3$ and $B_3$ contains exactly one of $y_2,\dots,y_5$.
\end{enumerate}
Then it suffices to show \eqref{eq:cancellation} with $E_{3;c_1,c_1'}$ replaced by $E$.
This is because:
\begin{itemize}
\item
	If $E'$ denotes the subspace of $E_{3;c_1,c_1'}$ consisting of $(\theta,y)$ that does not satisfy (ii), then the integrations in \eqref{eq:cancellation} with $E_{3;c_1,c_1'}$ replaced by $E'$ are defined even if we set $h_m=0$ for at least one $m\in\{2,3\}$ and the cancellation similar to \eqref{eq:limit_is_zero_1} occurs, by the same reason as in the above proofs. 
\item
	If $E''$ denotes the subspace of $E_{3;c_1,c_1'}$ consisting of $(\theta,y)$ that satisfies (ii) but does not satisfy (i). then the map $\varphi_{\alpha}$ ($\alpha$ is the loop of $X_3$ at the i-vertex labeled by $1$) has its image outside on $\supp(\vol)$ since $\wtf$ is supposed to be almost planar, and hence $\omega_{X_3}$ vanishes on $E''$.
\end{itemize}
Figure~\ref{fig:I(X_3)} shows the configurations in $E$ that may non-trivially contribute to the integration of $I(X_3)$.
\begin{figure}
\centering
\input{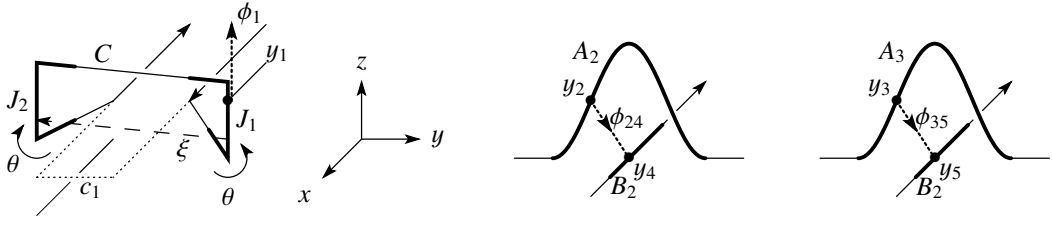}
\caption{The configuration that can non-trivially contribute to $I(X_3)$}
\label{fig:I(X_3)}
\end{figure}
Let $J_s$ ($s=1,2$) be the unit intervals identified with those on $C$ drawn with thick curves in Figure~\ref{fig:I(X_3)}.
We write $p_*\FH_{\wtf}(\theta)$ as $f_{\theta}$ for short.
Define $\phi_1\colon I_{c_1}\times J_s\to S^2$ ($s=1,2$), $\phi_{24}\colon A_2\times B_2\to S^2$ and $\phi_{35}\colon A_3\times B_3\to S^2$ by
\begin{equation}
 \phi_1(\theta,t)\coloneqq\frac{f_{\theta}'(t)}{\abs{f_{\theta}'(t)}},
 \quad
 \phi_{ij}(t,u)\coloneqq\frac{f(u)-f(t)}{\abs{f(u)-f(t)}},\quad(i,j)=(2,4),(3,5).
\end{equation}
Then
\begin{equation}\label{eq:I(X_3)_potentially_nonzero}
 \int_E\overline{p\circ\FH_{\wtf}}{}^*\omega_{X_3}=\int_{I_{c_1}\times(J_1\sqcup J_2)}\phi_1^*\vol\int_{A_2\times B_2}\phi_{24}^*\vol\int_{A_3\times B_3}\phi_{35}^*\vol.
\end{equation}

Define the diffeomorphisms $\xi\colon J_1\to J_2$ and $\eta\colon\R^3\to\R^3$ by
\begin{equation}
 \xi(t)=1-t,
 \quad
 \eta(x,y,z)\coloneqq(-x,y,-z).
\end{equation}
Then, with respect to the coordinates of $\R^3$ shown in Figure~\ref{fig:I(X_3)}, the following diagram commutes;
\begin{equation}
\begin{split}
 \xymatrix{
 I_{c_1}\times J_1\ar@{->}[r]^-{\phi_1}\ar@{->}[d]_-{\id\times\xi} & S^2\ar@{->}[d]^-{\eta} \\
 I_{c_1}\times J_2\ar@{->}[r]^-{\phi_1} & S^2\ar@{}[ul]|-{\circlearrowright}
 }
\end{split}
\end{equation}
and since $\xi$ reverses the orientation and $\eta$ preserves the orientation, we have
\begin{equation}
 \int_{I_{c_1}\times I_2}\phi_1^*\vol=-\int_{I_{c_1}\times I_1}\phi_1^*\vol
 \quad
 \text{and hence}
 \quad
 \int_{I_{c_1}\times(I_1\sqcup I_2)}\phi_1^*\vol=\sum_{s=1,2}\int_{I_{c_1}\times I_s}\phi_1^*\vol=0.
\end{equation}
Thus \eqref{eq:I(X_3)_potentially_nonzero} is zero.
\end{proof}

Thus we only need to compute the alternating sums of the integrations of $I(X_1)$ and $I(X_2)$ in the limit $h_1,h_2,h_3\to 0$.

\subsubsection{Computation of $I(X_1)$}
The following two subspaces of $E_{1;c_j,c_j'}$ ($j=1,2,3$) do not essentially contribute to the alternating sum of $I(X_1)$.
\begin{itemize}
\item
	The subspace where the arc near the left-most crossing moving in the FH-move contains no point; because the integrals on the subspace are the same for $\epsilon_j=+1$ and $\epsilon_j=-1$ and they cancel in the alternating sum.
\item
	The subspace where no edge joins points on $A_m$ and $B_m$ ($m=2,3$); because all the maps $\varphi_{\alpha}$ and hence the integrand $\omega_{X_1}$ can be defined even if $h_m=0$ and thus the cancellation similar to \eqref{eq:limit_is_zero_1} occurs.
\end{itemize}
Thus only the subspaces of types (1-a) and (1-b) consisting $(\theta,y)$ as shown in Figure~\ref{fig:I(X_1)} can essentially contribute to the integrations of $I(X_1)$.
\begin{figure}
\centering
\input{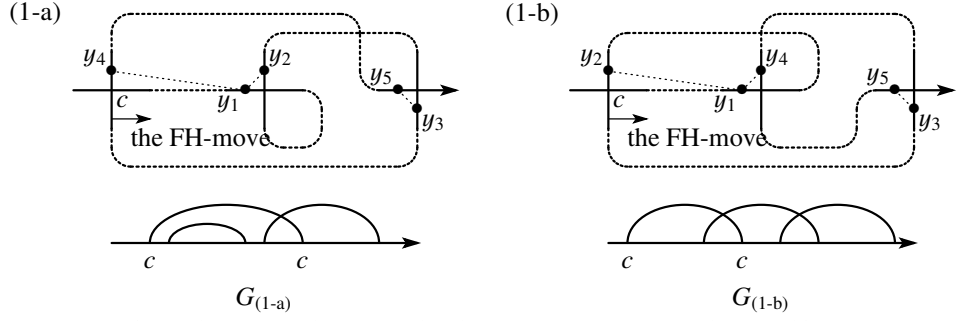}
\caption{Configurations essentially contributing to $I(X_1)$; they can exist only if the three crossings under consideration respect the Gauss diagrams $G_{\text{(1-a)}}$ or $G_{\text{(1-b)}}$}
\label{fig:I(X_1)}
\end{figure}
In both cases, the arc near the left-most crossing containing $y_2$ (case (1-a)) or $y_4$ (case (1-b)) moves to right in the FH-move, and when the arc comes over or under the middle crossing, the map $\varphi_{12}$ or $\varphi_{14}$ has its image in $\supp(\vol)$ and the integrand is not zero at that moment.

If three crossings $c_1,c_2,c_3$ under consideration respect one of the Gauss diagrams in Type I cycle (Figure~\ref{fig:Gauss_cycle_1}), then in the FH-cycle we meet the situation (1-a) in Figure~\ref{fig:I(X_1)} once, because the Gauss diagram $G_{\text{(1-a)}}$ appears once in Type I cycle.
If $c_1,c_2,c_3$ respect one of the Gauss diagrams in Type II cycle (Figure~\ref{fig:Gauss_cycle_2}), then in the FH-cycle we meet the situation (1-b) in Figure~\ref{fig:I(X_1)} twice, because the Gauss diagram $G_{\text{(1-b)}}$ appears twice in Type II cycle.
Otherwise we do not meet the situations (1-a) nor (1-b) and the integration vanishes.

\begin{prop}\label{prop:computation_I(X_1)}
We have
\begin{multline}\label{eq:I(X_1)_limit}
 \epsilon_1\epsilon_2\epsilon_3
 \sum_{c\in\{c_1,c_2,c_3\}}\int_{E_{1;c,c'}}\overline{p\circ\FH_{\wtf{}_{\epsilon_1,\epsilon_2,\epsilon_3}}}{}^*\omega_{X_1}\\
 =
 \begin{cases}
 1/8   & \text{if }c_1,c_2,c_3\text{ respect one of the Gauss diagrams in Type I cycle (Figure~\ref{fig:Gauss_cycle_1})},\\
 -1/4  & \text{if }c_1,c_2,c_3\text{ respect one of the Gauss diagrams in Type II cycle (Figure~\ref{fig:Gauss_cycle_2})},\\
 0     & \text{otherwise}.
 \end{cases}
\end{multline}
\end{prop}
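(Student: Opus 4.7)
The plan is to evaluate the integrals in \eqref{eq:I(X_1)_limit} by reducing each of them to a product of ``half-linking'' contributions as in Example~\ref{ex:I(X)_example}, and then to read off the two numerical values from the multiplicities of the relevant Gauss diagrams along the cycles of Type I and Type II. The $\epsilon$-prefactor should combine with three copies of the crossing signs produced by the three active edge-factors to yield an $\epsilon$-independent number.

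\emph{Step 1 (localization of the integrand).} I would first argue that in the limit $h_1,h_2,h_3\to 0$, the only configurations in $E_{1;c,c'}$ whose contribution to $\int\omega_{X_1}$ is not already forced to zero by the two cancellation arguments listed before the proposition are the ones drawn in Figure~\ref{fig:I(X_1)}. Since $\supp(\vol)$ lies in small neighborhoods of the poles $(0,0,\pm 1)\in S^2$ and $\wtf$ is almost planar, an edge-map $\varphi_\alpha$ has image in $\supp(\vol)$ only when its two endpoints sit on the over- and under-strands at a single crossing of $D$. Matching this constraint against the incidence pattern of $X_1$ and against the positions of $y_1,\dots,y_5$ relative to $A_1,B_1,\dots,A_3,B_3$ leaves exactly the two configurations (1-a) and (1-b).

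\emph{Step 2 (Fubini and half-linking evaluation).} On each contributing subspace I would apply Fubini to decompose $\omega_{X_1}=\bigwedge_\alpha\varphi_\alpha^*\vol$ into its factors, one per active edge of $X_1$. Two of these factors are standard near-a-crossing integrals which, by the computation in Example~\ref{ex:I(X)_example}, each evaluate to half the sign of the corresponding crossing. The third factor is the integral over $I_c$ paired with an arc of $D$, coming from the edge whose endpoint is dragged through the middle crossing during the FH-move; the same ``Hopf-link'' mechanism (Figure~\ref{fig:I(X)_example}, the right) gives $\pm\frac12$ times the sign of the remaining crossing. Multiplying the three factors yields $\pm\sgn(c_1)\sgn(c_2)\sgn(c_3)/8=\pm\epsilon_1\epsilon_2\epsilon_3/8$, so the prefactor $\epsilon_1\epsilon_2\epsilon_3$ collapses it to the $\epsilon$-independent number $\pm 1/8$.

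\emph{Step 3 (multiplicities).} Finally I would count occurrences along the three Gauss-diagram cycles of Figures~\ref{fig:Gauss_cycle_1}--\ref{fig:Gauss_cycle_3}: the diagram $G_{\text{(1-a)}}$ occurs exactly once along the Type I cycle and nowhere in Types II or III, while $G_{\text{(1-b)}}$ occurs exactly twice along Type II and nowhere else. Combined with the intrinsic relative sign between (1-a) and (1-b) determined in Step 2, this gives $+1/8$, $2\cdot(-1/8)=-1/4$, and $0$ in the three cases of the proposition.

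The hard part will be the sign bookkeeping in Step 2, specifically in the ``moving-arc'' factor. One has to reconcile the orientation of $I_c$ inherited from $S^1=\FH$ (together with the $\SO(4)$-twist built into the definition of the FH-action in \S\ref{s:FH_cycle}) with the orientation of the moving arc of $D$ and with the antisymmetry of $\vol$; it is this combination that is responsible for the opposite overall sign between configurations (1-a) and (1-b), and hence for the value $-1/4$ as opposed to $+1/4$ in the Type II case.
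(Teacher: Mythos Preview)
Your plan is essentially the paper's proof. The paper carries out your Step~2 by writing down an explicit local model (a bump function $b$ for the over/under arcs) so that the map $\varphi\colon\R^6\to(S^2)^{\times 3}$ factors as a linear diffeomorphism $\Phi\colon\R^6\to(\R^2)^{\times 3}$ followed by three copies of the ``half-linking'' map; the sign you flag as the hard part is then just $\det\Phi=\pm\epsilon_1\epsilon_2\epsilon_3$, and the relative minus sign between (1-a) and (1-b) comes from the different combinatorics of which edge of $X_1$ carries the $\theta$-variable (i.e.\ whether the moving point is $y_4$ or $y_2$), not from the $\SO(4)$-twist in the FH-action.
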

\begin{proof}
Consider the first case; we may assume that $c_1,c_2,c_3$ respect the Gauss diagram $G_{\text{(1-a)}}$.
Then only $E_{1;c_1,c_1'}$ can contain the configurations of type (1-a) and non-trivially contribute to the alternating sum of the integrations of $I(X_1)$.

Let $b\colon\R^1\to\R^1$ be an even function whose graph looks as in Figure~\ref{fig:I(X_1)-a}.
\begin{figure}
\centering
\input{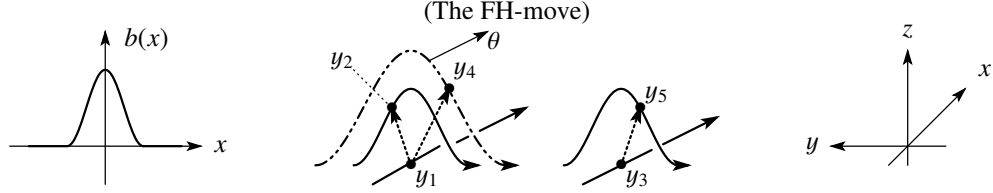}
\caption{Proof of Proposition~\ref{prop:computation_I(X_1)}; the case (1-a)}
\label{fig:I(X_1)-a}
\end{figure}
For $(\theta,x_1,\dots,x_5)\in\R^6$, consider $y_1,\dots,y_5\in\R^3$ given by
\begin{equation}\label{eq:five_points_(1-a)}
 y_1=(x_1,0,0),\ 
 y_2=(0,-\epsilon_2x_2,b(\epsilon_2x_2)),\ 
 y_3=(x_3,0,0),\ 
 y_4=(\theta,-\epsilon_1x_4,2b(\epsilon_1x_4/2)),\ 
 y_5=(0,-\epsilon_3x_5,b(x_5))
\end{equation}
and define $\varphi\colon\R^6\to(S^2)^{\times 3}$ by
\begin{equation}\label{eq:three_directions_1}
 \varphi(\theta,x_1,\dots,x_5)\coloneqq\left(\frac{y_2-y_1}{\abs{y_2-y_1}},\frac{y_5-y_3}{\abs{y_5-y_3}},\frac{y_4-y_1}{\abs{y_4-y_1}}\right).
\end{equation}
Then changing the valuables suitably, the left hand side of \eqref{eq:I(X_1)_limit} is equal to
\begin{equation}\label{eq:I(X_1)-(1-a)}
 \epsilon_1\epsilon_2\epsilon_3\int_{\R^6}\varphi^*(\vol^{\times 3}),
\end{equation}
where $\vol^{\times 3}=\pr_1^*\vol\wedge\pr_2^*\vol\wedge\pr_3^*\vol\in\Omega^6_{\DR}((S^2)^{\times 3})$.

Define $\Phi\colon\R^6\to(\R^2)^{\times 3}$ and $\psi_s\colon\R^2\to S^2$ ($s=1,2$) by respectively
\begin{gather}
 \label{eq:linear_diffeo} \Phi(\theta,x_1,\dots,x_5)\coloneqq((x_1,\epsilon_2x_2),(x_1-\theta,\epsilon_1x_4),(x_3,\epsilon_3x_5)),\\
 \psi_1(x,x')\coloneqq\frac{y'-y}{\abs{y'-y}},\quad
 \psi_2(x,x')\coloneqq\frac{y''-y}{\abs{y''-y}},
 \quad\text{where}\quad
 y\coloneqq(x,0,0),\ y'\coloneqq(0,-x,b(x)),\ y''=(0,-x',2b(x'/2)).
\end{gather}
Then $\Phi$ is a linear diffeomorphism whose determinant is $\epsilon_1\epsilon_2\epsilon_3$, and the following diagram is commutative;
\begin{equation}
\begin{split}
 \xymatrix{
  \R^6\ar@{->}[rr]^-{\varphi}\ar@{->}[rd]_-{\Phi} & \ar@{}[d]|-{\circlearrowright}& (S^2)^{\times 3}\\
   & (\R^2)^{\times 3}\ar@{->}[ru]_-{\psi_1^{\times 2}\times\psi_2}
 }
\end{split}
\end{equation}
Thus \eqref{eq:I(X_1)-(1-a)} is equal to
\begin{equation}
 (\epsilon_1\epsilon_2\epsilon_3)^2\left(\int_{\R^2}\psi^*_1\vol\right)^2\int_{\R^2}\psi_2^*\vol=\left(\frac{1}{2}\right)^3=\frac{1}{8},
\end{equation}
here $1/2$ appears by exactly the same reason as in Example~\ref{ex:I(X)_example}.

The second case that $c_1,c_2,c_3$ respect the Gauss diagram $G_{\text{(1-b)}}$ can be similarly computed, replacing
\begin{itemize}
\item
	\eqref{eq:five_points_(1-a)} and \eqref{eq:three_directions_1} respectively with
	\begin{gather}
	 y_1=(x_1,0,0),\ 
	 y_2=(\theta,-\epsilon_2x_2,b(\epsilon_2x_2/2)),\ 
	 y_3=(x_3,0,0),\ 
	 y_4=(0,-\epsilon_1x_4,b(x_4)),\ 
	 y_5=(0,-\epsilon_3x_5,b(x_5)),\\
	 \varphi(\theta,x_1,\dots,x_5)\coloneqq
	 \left(\frac{y_4-y_1}{\abs{y_4-y_1}},\frac{y_5-y_3}{\abs{y_5-y_3}},\frac{y_2-y_1}{\abs{y_2-y_1}}\right)
	\end{gather}
	(namely $y_2$ and $y_4$ are swapped),
	and
\item
	\eqref{eq:linear_diffeo} with
	\begin{equation}
	 \Phi(\theta,x_1,\dots,x_5)\coloneqq((x_1,\epsilon_2x_2),(x_1-\theta,\epsilon_1x_4),(x_3,\epsilon_3x_5)).
	\end{equation}
\end{itemize}
Then the determinant of $\Phi$ is $-\epsilon_1\epsilon_2\epsilon_3$, and because we meet the situation (1-b) twice in the FH-cycle, the left hand side of \eqref{eq:I(X_1)_limit} in this case is equal to
\begin{equation}
 -2(\epsilon_1\epsilon_2\epsilon_3)^2\left(\int_{\R^2}\psi^*_1\vol\right)^2\int_{\R^2}\psi^*_2\vol=-\frac{1}{4}.\qedhere
\end{equation}
\end{proof}

\subsubsection{Computation of $I(X_2)$}
The computation of $I(X_2)$ goes similarly to that of $I(X_1)$.
Only the subspaces of types (2-a) and (2-b) consisting of $(\theta,y)$ as shown in Figure~\ref{fig:I(X_2)} can essentially contribute to the alternating sum of the integrations of $I(X_2)$.
\begin{figure}
\centering
\input{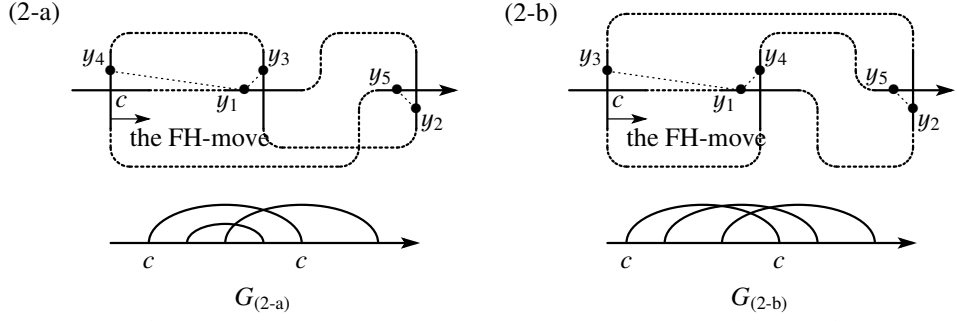}
\caption{Configurations essentially contributing to $I(X_2)$; they can exist only if the three crossings under consideration respect the Gauss diagrams $G_{\text{(2-a)}}$ or $G_{\text{(2-b)}}$}
\label{fig:I(X_2)}
\end{figure}
If three crossings $c_1,c_2,c_3$ under consideration respect one of the Gauss diagrams in Type II cycle (Figure~\ref{fig:Gauss_cycle_2}), then in the FH-cycle we meet the situation (2-a) in Figure~\ref{fig:I(X_1)} twice, because the Gauss diagram $G_{\text{(2-a)}}$ appears twice in Type II cycle.
If $c_1,c_2,c_3$ respect one of the Gauss diagrams in Type III cycle (Figure~\ref{fig:Gauss_cycle_3}), then in the FH-cycle we meet the situation (2-b) in Figure~\ref{fig:I(X_1)} six times, because the Gauss diagram $G_{\text{(2-b)}}$ appears six times in Type III cycle.

\begin{prop}\label{prop:computation_I(X_2)}
We have
\begin{multline}\label{eq:I(X_2)_limit}
 \epsilon_1\epsilon_2\epsilon_3
 \sum_{c\in\{c_1,c_2,c_3\}}\int_{E_{2;c,c'}}\overline{p\circ\FH_{\wtf{}_{\epsilon_1,\epsilon_2,\epsilon_3}}}{}^*\omega_{X_2}\\
 =
 \begin{cases}
 -1/4  & \text{if }c_1,c_2,c_3\text{ respect one of the Gauss diagrams in Type II cycle (Figure~\ref{fig:Gauss_cycle_2})},\\
 3/4   & \text{if }c_1,c_2,c_3\text{ respect one of the Gauss diagrams in Type III cycle (Figure~\ref{fig:Gauss_cycle_3})},\\
 0 & \text{otherwise}.
 \end{cases}
\end{multline}
\end{prop}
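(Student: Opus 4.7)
The plan is to follow the structure of the proof of Proposition~\ref{prop:computation_I(X_1)} with the graph $X_2$ in place of $X_1$. First I would carry out the same ``essential subspace'' reduction: on the subspace of $E_{2;c,c'}$ where the arc near the left-most crossing carries no configuration point, the integrand is independent of $\epsilon_j$ and so cancels in the alternating sum; on the subspace where no edge of $X_2$ connects a point in $A_m$ with a point in $B_m$ for some $m\in\{2,3\}$, one may set $h_m=0$ and the corresponding alternating sum vanishes by the analogue of \eqref{eq:limit_is_zero_1}. The only configurations surviving both of these cancellations are those of the two types (2-a) and (2-b) depicted in Figure~\ref{fig:I(X_2)}, in which the moving arc meets the middle crossing so that the maps $\varphi_{12}$ and $\varphi_{13}$ (or $\varphi_{13}$ and $\varphi_{14}$ in the other case) simultaneously take values in the small neighborhoods of $(0,0,\pm1)$ that contain $\supp(\vol)$.

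Next I would read off from Figures~\ref{fig:Gauss_cycle_1}, \ref{fig:Gauss_cycle_2} and \ref{fig:Gauss_cycle_3} how often the Gauss diagrams $G_{\text{(2-a)}}$ and $G_{\text{(2-b)}}$ appear in each of the three FH-cycles. The diagram $G_{\text{(2-a)}}$ appears twice in a Type II cycle and in no Type I or Type III cycle, while $G_{\text{(2-b)}}$ appears six times in a Type III cycle and in no Type I or Type II cycle. Consequently the left hand side of \eqref{eq:I(X_2)_limit} is zero unless $c_1,c_2,c_3$ respect Type II or Type III, and in those two cases it equals respectively $2$ times and $6$ times the contribution of a single (2-a) or (2-b) event.

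To evaluate a single event I would parametrize the five configuration points as in \eqref{eq:five_points_(1-a)}, using the even bump function $b$ from Figure~\ref{fig:I(X_1)-a} to model the over-arc that is swept across the middle crossing by the FH-move. In case (2-a) I place $y_1,y_3$ on the long axis, $y_2,y_5$ near the two static crossings on the right, and $y_4$ on the moving arc parametrized by $\theta$; the three edges of $X_2$ then define a map $\varphi\colon\R^6\to(S^2)^{\times 3}$. A linear change of coordinates $\Phi\colon\R^6\to(\R^2)^{\times 3}$ analogous to \eqref{eq:linear_diffeo} factors $\varphi$ through two copies of $\psi_1$ and one copy of $\psi_2$, so the integral reduces to $\pm(\int_{\R^2}\psi_1^*\vol)^2(\int_{\R^2}\psi_2^*\vol)=\pm(1/2)^3=\pm 1/8$ exactly as in Example~\ref{ex:I(X)_example}. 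The sign is determined by $\epsilon_1\epsilon_2\epsilon_3\cdot\det\Phi$, and one checks that for (2-a) $\det\Phi$ comes out with the opposite sign to the Proposition~\ref{prop:computation_I(X_1)} (1-a) case, yielding $-1/8$ per event, while for (2-b) the shape of the contributing configuration forces $\det\Phi$ to give $+1/8$ per event. Multiplying by the event counts $2$ and $6$ gives respectively $-1/4$ and $3/4$, which is \eqref{eq:I(X_2)_limit}.

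The main obstacle I expect is the sign bookkeeping: in each case one must correctly identify which three edges of $X_2$ produce the three nonvanishing pullbacks of $\vol$, then assemble the corresponding linear map $\Phi$ and compute the sign of its determinant, and finally reconcile this with the sign that the ordering of vertices of $X_2$ contributes through $\omega_{X_2}$. A subsidiary delicate point is confirming that for (2-b) all six occurrences of $G_{\text{(2-b)}}$ in the Type III cycle produce the \emph{same} sign, rather than cancelling in pairs; this should follow from the symmetry of the configuration in Figure~\ref{fig:I(X_2)} under permutation of $c_1,c_2,c_3$, together with the fact that each occurrence is realized by the same local coordinate picture up to relabeling.
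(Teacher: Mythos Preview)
Your proposal is correct and follows essentially the same route as the paper: reduce to the essential configurations (2-a) and (2-b), count their occurrences in the Type~II and Type~III cycles (twice and six times, respectively), parametrize each local event via the bump function~$b$, factor through a linear $\Phi$ to reduce to $(\int_{\R^2}\psi_1^*\vol)^2\int_{\R^2}\psi_2^*\vol=1/8$, and read off the sign from $\det\Phi$. One small discrepancy: in the paper's labeling for (2-a) it is $y_1,y_2$ that lie on the long axis and $y_3,y_5$ at the static crossings (with edges $1\text{--}3$, $1\text{--}4$, $2\text{--}5$ of $X_2$ supplying the three direction maps), not $y_1,y_3$ and $y_2,y_5$ as you wrote; once you align your labels with the actual edge set of $X_2$, the determinant computation you outline goes through exactly as in Proposition~\ref{prop:computation_I(X_1)}.
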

\begin{proof}
Consider the first case that $c_1,c_2,c_3$ respect the Gauss diagram $G_{\text{(2-a)}}$.
Then only $E_{2;c_1,c_1'}$ can contain the configurations of type (2-a) and non-trivially contribute to the integral.
\begin{figure}
\centering
\input{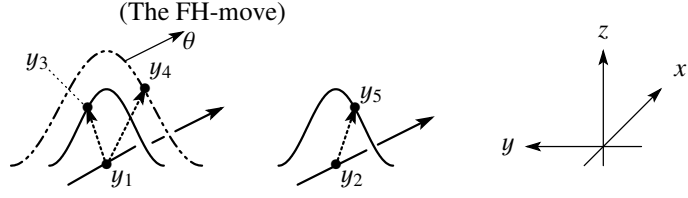}
\caption{Proof of Proposition~\ref{prop:computation_I(X_2)}}
\label{fig:I(X_2)-a}
\end{figure}
The proof of this case goes very similarly to the above ones;
we just need to replace
\begin{itemize}
\item
	\eqref{eq:five_points_(1-a)} and \eqref{eq:three_directions_1} respectively with
	\begin{gather}
	 y_1=(x_1,0,0),\ 
	 y_2=(x_2,0,0),\ 
	 y_3=(0,-\epsilon_2x_3,b(\epsilon_2x_3)),\ 
	 y_4=(\theta,-\epsilon_1x_4,2b(\epsilon_1x_4/2)),\ 
	 y_5=(0,\epsilon_3x_5,b(\epsilon_3x_5)),\\
	 \varphi(\theta,x_1,\dots,x_5)\coloneqq\left(\frac{y_3-y_1}{\abs{y_3-y_1}},\frac{y_5-y_2}{\abs{y_5-y_2}},\frac{y_4-y_1}{\abs{y_4-y_1}}\right),
	\end{gather}
\item
	\eqref{eq:linear_diffeo} with
	\begin{equation}
	 \Phi(\theta,x_1,\dots,x_5)\coloneqq((x_1,\epsilon_2x_3),(x_1-\theta,\epsilon_1x_4),(x_2,\epsilon_3x_5)).
	\end{equation}
\end{itemize}
Then $\Phi$ is a linear diffeomorphism with the determinant $-\epsilon_1\epsilon_2\epsilon_3$, and because we meet the situation (2-a) twice in the FH-cycle, the left hand side of \eqref{eq:I(X_2)_limit} in this case is equal to
\begin{equation}
 -2(\epsilon_1\epsilon_2\epsilon_3)^2\left(\int_{\R^2}\psi^*_1\vol\right)^2\int_{\R^2}\psi^*_2\vol=-\frac{1}{4}.
\end{equation}

Consider the second case that $c_1,c_2,c_3$ respect the Gauss diagram $G_{\text{(2-b)}}$.
The proof of this case goes very similarly to that of the case (1-b) in Proposition~\ref{prop:computation_I(X_1)}; we just need to replace
\begin{itemize}
\item
	\eqref{eq:five_points_(1-a)} and \eqref{eq:three_directions_1} respectively with
	\begin{gather}
	 y_1=(x_1,0,0),\ 
	 y_2=(x_2,0,0),\ 
	 y_3=(\theta,-\epsilon_1x_3,2b(\epsilon_1x_3/2)),\ 
	 y_4=(0,-\epsilon_2x_4,b(\epsilon_2x_4)),\ 
	 y_5=(0,\epsilon_3x_5,b(\epsilon_3x_5)),\\
	 \varphi(\theta,x_1,\dots,x_5)\coloneqq\left(\frac{y_4-y_1}{\abs{y_4-y_1}},\frac{y_5-y_2}{\abs{y_5-y_2}},\frac{y_3-y_1}{\abs{y_3-y_1}}\right),
	\end{gather}
\item
	\eqref{eq:linear_diffeo} with
	\begin{equation}
	 \Phi(\theta,x_1,\dots,x_5)\coloneqq((x_1-\theta,\epsilon_2x_3),(x_1,\epsilon_1x_4),(x_2,\epsilon_3x_5)).
	\end{equation}
\end{itemize}
Then $\Phi$ is a linear diffeomorphism with the determinant $\epsilon_1\epsilon_2\epsilon_3$, and because we meet the situation (2-b) six times in the FH-cycle, the left hand side of \eqref{eq:I(X_2)_limit} in this case is equal to
\begin{equation}
 6(\epsilon_1\epsilon_2\epsilon_3)^2\left(\int_{\R^2}\psi^*_1\vol\right)^2\int_{\R^2}\psi^*_2\vol=\frac{3}{4}.\qedhere
\end{equation}
\end{proof}

\begin{proof}[Proof of Theorem~\ref{thm:D^3v}]
Let $c_1,c_2$ and $c_3$ respect one of the Gauss diagrams in type I cycle (Figure~\ref{fig:Gauss_cycle_1}).
Then by \eqref{eq:X_1_and_X_2_determine_D^3v} and Propositions \ref{prop:computation_I(X_1)}, \ref{prop:computation_I(X_2)} we have
\begin{multline}
 D^3v(\wtf)=
 \sum_{k=1,2}a_k\sum_{c\in\{c_1,c_2,c_3\}}\sum_{\epsilon_1,\epsilon_2,\epsilon_3\in\{+1,-1\}}\epsilon_1\epsilon_2\epsilon_3\int_{E_{k;c,c'}}\overline{p\circ\FH_{\wtf{}_{\epsilon_1,\epsilon_2,\epsilon_3}}}{}^*\omega_{X_k}\\
 =(-2)\cdot\sum_{\epsilon_1,\epsilon_2,\epsilon_3\in\{+1,-1\}}\frac{1}{8}+1\cdot 0
 =-2\cdot\frac{1}{8}\cdot 8=-2.
\end{multline}
Next suppose that $c_1,c_2$ and $c_3$ respect one of the Gauss diagrams in type II cycle (Figure~\ref{fig:Gauss_cycle_1}).
Then by \eqref{eq:X_1_and_X_2_determine_D^3v} and Propositions~\ref{prop:computation_I(X_1)} and \ref{prop:computation_I(X_2)},
\begin{multline}
 D^3v(\wtf)
 =\sum_{k=1,2}a_k\sum_{c\in\{c_1,c_2,c_3\}}\sum_{\epsilon_1,\epsilon_2,\epsilon_3\in\{+1,-1\}}\epsilon_1\epsilon_2\epsilon_3\int_{E_{k;c,c'}}\overline{p\circ\FH_{\wtf{}_{\epsilon_1,\epsilon_2,\epsilon_3}}}{}^*\omega_{X_k}\\
 =(-2)\cdot\sum_{\epsilon_1,\epsilon_2,\epsilon_3\in\{+1,-1\}}\left(-\frac{1}{4}\right)+1\cdot\sum_{\epsilon_1,\epsilon_2,\epsilon_3\in\{+1,-1\}}\left(-\frac{1}{4}\right)=2.
\end{multline}
Lastly suppose that $c_1,c_2$ and $c_3$ respect one of the Gauss diagrams in type III cycle (Figure~\ref{fig:Gauss_cycle_1}).
Then
\begin{equation}
 D^3v(\wtf)
 =\sum_{k=1,2}a_k\sum_{c\in\{c_1,c_2,c_3\}}\sum_{\epsilon_1,\epsilon_2,\epsilon_3\in\{+1,-1\}}\epsilon_1\epsilon_2\epsilon_3\int_{E_{k;c,c'}}\overline{p\circ\FH_{\wtf{}_{\epsilon_1,\epsilon_2,\epsilon_3}}}{}^*\omega_{X_k}\\
 =(-2)\cdot0+1\cdot\sum_{\epsilon_1,\epsilon_2,\epsilon_3\in\{+1,-1\}}\frac{3}{4}=6.
\end{equation}
If $c_1,c_2$ and $c_3$ respect no Gauss diagram in three cycles, then $D^3v(\wtf)=0$.
\end{proof}

\subsection{An explicit description of $v$}\label{ss:explicit_form}
It is known (see \cite[p.~215]{JacksonMoffatt19} for example) that the space of the Vassiliev invariants for framed knots of order less than or equal to three are multiplically generated by the framing number $\lk$, the Casson invariant $v_2$ and the order three invariant $v_3$ (characterized by the conditions in Theorem~\ref{thm:main}).
Thus all the Vassiliev invariants of order less than or equal to three are linear combinations of
\begin{equation}
 \lk,\quad
 v_2,\quad
 \lk^2,\quad
 v_3,\quad
 \lk\cdot v_2,\quad
 \lk^3.
\end{equation}

\begin{lem}\label{lem:linking_term_vanish}
Our invariant $v$ is of the form $v=av_3+b\lk\cdot v_2+cv_2$ for some constants $a,b,c\in\R$.
\end{lem}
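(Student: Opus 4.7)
By Corollary~\ref{cor:v_order_3}, $v$ is a Vassiliev invariant of order $\le 3$ for framed long knots, so the multiplicative generation result recalled just before the lemma yields
\[
 v = A + B\lk + C v_2 + D \lk^2 + E v_3 + F \lk\cdot v_2 + G \lk^3
\]
for some constants $A,B,C,D,E,F,G\in\R$. The lemma is the assertion that $A=B=D=G=0$, i.e.\ that every monomial involving only $\lk$ (and the constant) drops out.

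To isolate those four coefficients, I plan to evaluate $v$ on a trivial framed long knot $\wtf_w=(f_0,w)$, where $f_0$ is the standard straight long knot and $w\in\Z$ is arbitrary. Since $v_2(f_0)=v_3(f_0)=0$, the decomposition above collapses to
\[
 v(\wtf_w) = A + Bw + Dw^2 + Gw^3.
\]
Thus, once one establishes $v(\wtf_w)=0$ for every $w\in\Z$, the polynomial identity on the right forces $A=B=D=G=0$, since a nonzero real polynomial has only finitely many integer zeros, completing the proof.

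The main step is therefore to show $v(\wtf_w)=0$. By definition \eqref{eq:def_v}, $v(\wtf_w)=\int_{p_*\FH_{\wtf_w}}I(X)$, and because $I(X)$ is a closed $1$-form on $\K$ (Theorem~\ref{thm:Gramain}), this integral depends only on the class of $p_*\FH_{\wtf_w}$ in $H_1(\K;\R)$. The cycle $p_*\FH_{\wtf_w}$ lies entirely in the path component $\K_0\subset\K$ of the trivial long knot, and by Budney's description of $\K$ as the free little $2$-cubes algebra on prime long knots (the identity component of such an algebra being contractible), the space $\K_0$ is contractible; in particular $H_1(\K_0;\R)=0$, so $p_*\FH_{\wtf_w}$ is null-homologous and the integral vanishes.

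The main obstacle is justifying the topological fact $H_1(\K_0;\R)=0$, but this follows from well-established results on the homotopy type of the space of long knots and can simply be cited. (A more hands-on alternative would be to exhibit an explicit null-homotopy of the FH-cycle for a standard round unknot with framing~$w$, using the $\SO(4)$-symmetry defining the FH-action, but invoking the topological fact is cleaner and suffices for the purposes of the lemma.)
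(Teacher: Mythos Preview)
Your proof is correct and follows the same overall strategy as the paper: write $v$ as a linear combination of the generators of the order~$\le 3$ Vassiliev invariants, then evaluate on the trivial knot $f_0$ with arbitrary framing $w$ to kill the pure-$\lk$ terms.

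The only difference is in how you justify $v(f_0,w)=0$. The paper simply observes that $p_*\FH_{(f_0,w)}$ is a \emph{constant} loop in $\K$: the $S^1$-action defining $\FH$ fixes the round unknot as an unframed embedding, so after applying $p$ the loop does not move at all, and the integral of any $1$-form over it is trivially zero. You instead argue that the loop is null-homologous because the unknot component $\K_0$ is contractible (citing Budney/Hatcher) and $I(X)$ is closed. Your argument is valid, but it invokes a substantially deeper topological input than is needed; the paper's one-line observation that the projected loop is literally constant is both simpler and self-contained. Your parenthetical remark about a ``hands-on alternative'' is in fact closer to what the paper does---and indeed no null-homotopy is required, since the loop is already a point.
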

\begin{proof}
The value of $v$ on the trivial long knot $f_0(x)=(x,0,0)$ together with a framing number $w\in\Z$ is a linear combination of $w$, $w^2$ and $w^3$ because $v_2(f_0)=v_3(f_0)=0$.
But by the definition $p_*H_{(f_0,w)}$ is a constant loop of $\K$ for any $w\in\Z$.
Thus $v(f_0,w)=0$ for any $w\in\Z$, and the coefficients of $\lk$, $\lk^2$ and $\lk^3$ must be zero.
\end{proof}

Below we compute the constants $a,b,c$ in Lemma~\ref{lem:linking_term_vanish}.
We denote by $3_1^+$ and $3_1^-$ respectively the right-handed and the left-handed trefoil knots, by $4_1$ the figure eight knot.
By the formulas for $v_2$ and $v_3$ in \cite[Theorems 1, 2]{PolyakViro94} we have
\begin{equation}\label{eq:values_of_v2_and_v3}
 v_2(3_1^+)=v_2(3_1^-)=1,\quad
 v_2(4_1)=-1,\quad
 v_3(4_1)=0.
\end{equation}

\begin{prop}
We have $a=6$, $b=-1$.
\end{prop}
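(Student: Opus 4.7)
The plan is to determine $a$ and $b$ by two independent computations, each yielding a linear relation among the unknowns.

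First, I would apply Theorem~\ref{thm:D^3v} to the blackboard-framed right-handed trefoil $\wtf=(3_1^+,3)$. Its three positive crossings respect the unique Gauss diagram in the Type~III cycle, so $D^3v(\wtf)=6$. Expanding the alternating sum \eqref{eq:derivative} via $v=av_3+b\lk v_2+cv_2$, the terms from altering exactly one or two crossings vanish, since the resulting framed knots have underlying unknot (where $v_2=v_3=0$, hence $v=0$ by Lemma~\ref{lem:linking_term_vanish}), while altering all three crossings turns $(3_1^+,3)$ into $(3_1^-,-3)$. Thus
\[
 6=D^3v(\wtf)=v(3_1^+,3)-v(3_1^-,-3)=(a+3b+c)-(-a-3b+c)=2a+6b,
\]
which gives $a+3b=3$.

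Second, I would analyze how $v$ depends on the framing number while the underlying knot $f$ is held fixed. Represent $(f,w+1)$ by inserting one positive kink near the left end of a diagram of $(f,w)$. The Fox-Hatcher cycle of $(f,w+1)$ then contains two extra FH-moves (on the new kink crossing and on its newborn counterpart) not present in that of $(f,w)$. Projecting via $p$, I claim these trace out a 1-cycle in $\K_f$ homologous to the Gramain cycle $G_f$ with reversed orientation: sliding the new kink once around the rest of the knot via the FH-move amounts to rotating $f$ around the long axis with opposite sign relative to $G_f$. Combining with Theorem~\ref{thm:Gramain} yields
\[
 v(f,w+1)-v(f,w)=-\int_{G_f}I(X)=-v_2(f).
\]
On the other hand, Lemma~\ref{lem:linking_term_vanish} gives $v(f,w+1)-v(f,w)=bv_2(f)$. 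Specializing to $f=3_1^+$ (for which $v_2=1\neq 0$), we conclude $b=-1$, and substituting into $a+3b=3$ yields $a=6$.

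The main obstacle is the geometric claim in the second step, namely, identifying the extra portion of the Fox-Hatcher cycle introduced by the new kink with $-G_f$ in $H_1(\K_f)$. A rigorous proof requires a careful parametrized isotopy argument tracking the kink as the FH-move carries it from the left to the right end of the long axis, together with a sign check against the orientation conventions of both cycles. As a consistency check, the resulting formula $v(f,w+1)-v(f,w)=-v_2(f)$ matches the framing-dependence of Mortier's invariant in Remark~\ref{rem:Gramain}.
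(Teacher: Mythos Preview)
Your first step—computing $D^3v$ on the blackboard-framed right trefoil to obtain $a+3b=3$—is correct and matches the paper exactly.

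Your second step is genuinely different. The paper obtains a second linear relation by applying Theorem~\ref{thm:D^3v} once more, this time to a diagram of the figure-eight knot $4_1$ with four crossings: choosing three of them (which respect a Type~II Gauss diagram, so $D^3v=2$), one finds that among the eight sign-altered diagrams only $(4_1,0)$ and $(3_1^-,-4)$ are nontrivial, yielding $2=a+4b$. Solving the two equations gives $a=6$, $b=-1$. This route stays entirely within the combinatorics of Theorem~\ref{thm:D^3v} and the known values \eqref{eq:v_3_characterization}, \eqref{eq:values_of_v2_and_v3}, and is therefore completely self-contained.

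Your route instead isolates $b$ directly from the framing-dependence of $v$, which is conceptually attractive since $b$ is precisely the coefficient of $\lk\cdot v_2$. But as you yourself flag, the key geometric claim—that the extra portion of $p_*\FH$ contributed by one positive kink is homologous to $-G_f$—is left unproven. This is a genuine gap: the FH-cycle of $(f,w+1)$ is not literally that of $(f,w)$ with two moves inserted, since the kink is carried through every intermediate diagram, so one must actually construct a homotopy identifying the difference cycle with $\pm G_f$ and then fix the sign. The paper's approach sidesteps this geometric input entirely, at the cost of one more $D^3v$ computation; your approach, if completed, would give a cleaner explanation of why $b=-1$ and would in fact anticipate the argument the paper uses later for $c=0$.
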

\begin{proof}
Consider the ``standard'' diagram of $3_1^+$ in Figure~\ref{fig:Gauss} and write it as $f=f_{+,+,+}$.
This can be seen as a framed long knot with framing number $+3$.
The diagram $f_{-,-,-}$ is $3_1^-$ with framing number $-3$ and all the other $f_{\epsilon_1,\epsilon_2,\epsilon_3}$ are trivial.
The Gauss diagram in Figure~\ref{fig:Gauss} appears in the Type III cycle in Figure~\ref{fig:Gauss_cycle_3} and $D^3v(f)=6$ by Theorem~\ref{thm:D^3v}.
Thus we have
\begin{equation}\label{eq:a+3b=3}
 6=D^3v(f)=\bigl(av_3(3_1^+)+b\cdot 3 +cv_2(3_1^+)\bigr)-\bigl(av_3(3_1^-)+b\cdot(-3) +cv_2(3_1^-)\bigr)
 =2a+6b,
\end{equation}
here the last equality holds by \eqref{eq:v_3_characterization} and \eqref{eq:values_of_v2_and_v3}.

Next consider the diagram of $4_1$ in Figure~\ref{fig:figure_eight}.
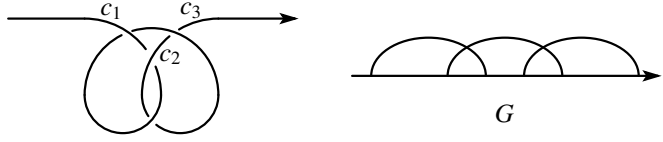
\begin{figure}
\centering
{\unitlength 0.1in%
\begin{picture}(34.0000,7.4500)(1.0000,-9.0000)%
%
\special{pn 13}%
\special{ar 500 700 400 400 4.7123890 5.6396842}%
%
\special{pn 13}%
\special{ar 700 700 200 200 6.2831853 3.1415927}%
%
\special{pn 13}%
\special{ar 850 700 350 350 4.4209322 6.2831853}%
%
\special{pn 13}%
\special{ar 1000 700 200 200 2.5535901 3.1415927}%
%
\special{pn 13}%
\special{ar 1200 700 400 400 3.1415927 4.0176507}%
%
\special{pn 13}%
\special{ar 500 700 400 400 5.9026789 6.2831853}%
%
\special{pn 13}%
\special{ar 850 700 350 350 3.1415927 4.2487414}%
%
\special{pn 13}%
\special{ar 1000 700 200 200 6.2831853 2.2455373}%
%
\special{pn 13}%
\special{ar 1200 700 400 400 4.1719695 4.7123890}%
%
\special{pn 13}%
\special{pa 1200 300}%
\special{pa 1600 300}%
\special{fp}%
\special{sh 1}%
\special{pa 1600 300}%
\special{pa 1533 280}%
\special{pa 1547 300}%
\special{pa 1533 320}%
\special{pa 1600 300}%
\special{fp}%
%
\special{pn 13}%
\special{pa 100 300}%
\special{pa 500 300}%
\special{fp}%
\put(7.0000,-3.0000){\makebox(0,0)[rb]{$c_1$}}%
\put(10.0000,-3.0000){\makebox(0,0)[lb]{$c_3$}}%
\put(9.5000,-5.0000){\makebox(0,0){$c_2$}}%
%
\special{pn 13}%
\special{pa 1900 600}%
\special{pa 3500 600}%
\special{fp}%
\special{sh 1}%
\special{pa 3500 600}%
\special{pa 3433 580}%
\special{pa 3447 600}%
\special{pa 3433 620}%
\special{pa 3500 600}%
\special{fp}%
%
\special{pn 13}%
\special{ar 2300 600 300 200 3.1415927 6.2831853}%
%
\special{pn 13}%
\special{ar 2700 600 300 200 3.1415927 6.2831853}%
%
\special{pn 13}%
\special{ar 3100 600 300 200 3.1415927 6.2831853}%
\put(27.0000,-8.0000){\makebox(0,0){$G$}}%
\end{picture}}%
\caption{A diagram of $g=4_1$ and the Gauss diagram that $c_1,c_2,c_3$ respect}
\label{fig:figure_eight}
\end{figure}
We write it as $g=g_{+,-,+}$, focusing on $c_1,c_2,c_3$.
This can be seen as a framed long knot with framing number $0$.
Then $g_{-,-,-}$ is the $3_1^-$ with framing number $-4$ and all the other $g_{\epsilon_1,\epsilon_2,\epsilon_3}$ are trivial.
The Gauss diagram $G$ in Figure~\ref{fig:figure_eight} appears in the Type II cycle in Figure~\ref{fig:Gauss_cycle_2} and $D^3v(g)=2$ by Theorem~\ref{thm:D^3v}.
Thus we have
\begin{equation}\label{eq:a+4b=2}
 2=D^3v(g)=-\bigl(av_3(4_1)+b\cdot 0+cv_2(4_1)\bigr)-\bigl(av_3(3_1^-)+b\cdot(-4)+cv_2(3_1^-)\bigr)
 =a+4b,
\end{equation}
here the last equality holds again by \eqref{eq:v_3_characterization} and \eqref{eq:values_of_v2_and_v3}.
Therefore $a=6$, $b=-1$ by \eqref{eq:a+3b=3} and \eqref{eq:a+4b=2}.
\end{proof}

\begin{prop}
We have $c=0$.
\end{prop}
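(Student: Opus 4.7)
The plan is to exploit a mirror anti-symmetry of $v$. Let $r\in O(3)$ denote the reflection $(x,y,z)\mapsto(x,y,-z)$, which fixes the long axis $\R^1\times\{(0,0)\}$, and let $\wtf^*=(r\circ f,-w)$ be the associated mirror of the framed long knot $\wtf=(f,w)$. I claim $v(\wtf^*)=-v(\wtf)$. Granting this, applying the identity to the right-handed trefoil with its standard framing $(3_1^+,3)$---whose mirror is $(3_1^-,-3)$---and using Lemma~\ref{lem:linking_term_vanish} with the already-computed $a=6,\,b=-1$ together with $v_2(3_1^{\pm})=1$ and $v_3(3_1^{\pm})=\pm 1$, one obtains
\[ -3+c=v(3_1^-,-3)=-v(3_1^+,3)=-(3+c), \]
forcing $c=0$.

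To establish $v(\wtf^*)=-v(\wtf)$, I would choose the anti-symmetric unit volume form $\vol$ on $S^2$ to additionally satisfy $r^*\vol=-\vol$; this is possible by taking $\vol=\lambda\cdot\omega_{\mathrm{area}}$, where $\omega_{\mathrm{area}}$ is the standard area form and $\lambda$ is a bump function supported near the two poles that is invariant under both the antipodal map $i$ and the reflection $r$. Under the induced involution on each $E_{X_k}$, every factor $\varphi_{\alpha}^*\vol$ in $\omega_{X_k}$ changes sign, contributing $(-1)^{e(X_k)}$, and the fiber integration along $\pi_{X_k}$ introduces an additional sign $(-1)^{v_f(X_k)}$ from the orientation reversal on each f-vertex $\R^3$-factor. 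The constraint $2e=1+v_i+3v_f$ forced by $I(X_k)\in\Omega^1$, combined with the common value $v_i+v_f=5$ for every one of the nine graphs $X_1,\ldots,X_9$ in Figure~\ref{fig:Nontrivalent_graph}, yields $e+v_f=3+2v_f$, which is always odd; hence $(r_*)^*I(X_k)=-I(X_k)$ uniformly across $k$, and so $(r_*)^*I(X)=-I(X)$. Combined with the $r$-equivariance $p_*\FH_{\wtf^*}=r_*(p_*\FH_\wtf)$ of the Fox-Hatcher cycle---which follows from the naturality of $\FH$ under isometries of $\R^3$---this gives $v(\wtf^*)=-v(\wtf)$.

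The main technical obstacle is verifying the $r$-equivariance of $\FH$: one must trace through the $\SO(4)$-conjugation in the definition of the Fox-Hatcher action and confirm that the induced map on the $S^1$-parameter preserves its orientation (rather than reversing it, which would instead yield $v(\wtf^*)=+v(\wtf)$ and contradict the ansatz of Lemma~\ref{lem:linking_term_vanish}). An alternative that avoids the symmetry argument would be to compute $v(3_1^+,0)$ or $v(4_1,0)$ directly by adapting the localization and cancellation techniques of \S\ref{ss:D^3v} to suitable knot diagrams.
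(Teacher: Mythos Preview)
Your mirror-symmetry approach is correct and genuinely different from the paper's argument. The paper does not use any reflection at all: it invokes Hatcher's observation that on the standard diagram of $3_1^+$ (blackboard framing $+3$) the Fox--Hatcher cycle $p_*\FH_{\wtf}$ is homologous to $3$ times the Gramain cycle $G_f$, and then plugs in the earlier theorem $\int_{G_f}I(X)=v_2(f)$ to obtain $6v_3(3_1^+)-3v_2(3_1^+)+c\,v_2(3_1^+)=3v_2(3_1^+)$, whence $c=0$.

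Your route computes the parity of $I(X)$ under $r_*$ and pairs it with the (evident, from the diagrammatic description of the FH-move) equivariance $p_*\FH_{\wtf^*}=r_*(p_*\FH_{\wtf})$. The sign bookkeeping $(-1)^{e+\vf}=(-1)^{3+2\vf}=-1$ is right, and your choice of $\vol$ with $r^*\vol=-\vol$ is harmless since it is compatible with the anti-symmetry and pole-support hypotheses already in force in \S\ref{ss:D^3v}. Your own remark that the ``wrong'' orientation of the $S^1$-parameter would force the odd part $6v_3-\lk\cdot v_2$ of $v$ to vanish (contradicting the previously determined $a=6$, $b=-1$) in fact closes the only gap you flag: given $(r_*)^*I(X)=-I(X)$ and that $[p_*\FH_{\wtf^*}]=\pm r_*[p_*\FH_{\wtf}]$, the minus sign is forced, and $c=0$ follows. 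Compared with the paper, your argument is more self-contained (no appeal to Hatcher's $\FH\sim 3G$ on the trefoil or to Theorem~\ref{thm:Gramain}), while the paper's proof is a two-line application of results already in hand.
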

\begin{proof}
As explained in \cite{Hatcher02}, for the ``standard'' diagram of $\wtf=3_1^+$ (with framing number $+3$, see Figure~\ref{fig:Gauss}), the FH-cycle $p_*\FH_{\wtf}$ is homologous to 3 times the Gramain cycle $G_f$ (see Remark~\ref{rem:Gramain}), where $f=p(\wtf)\in\K$.
This is because, as we can see in the Figure in \cite[p.~3]{Hatcher02}, each of the FH-moves on the crossings of $\wtf$ is the rotation around the long axis by degree $\pi$, and in the FH-cycle we perform six times of the FH-moves.
Thus
\begin{equation}\label{eq:v(3_1^+)}
 6v_3(3_1^+)-3v_2(3_1^+)+cv_2(3_1^+)=v(\wtf)=\int_{p_*\FH_{\wtf}}I(X)=3\int_{G_f}I(X).
\end{equation}
In \cite{K09} we have proved that the integration of $I(X)$ over $G_f$ is equal to $v_2(f)$.
Therefore \eqref{eq:v(3_1^+)} can be rewritten as
\begin{equation}
 6\cdot 1-3\cdot 1+c\cdot 1=3\cdot 1,
\end{equation}
and we have $c=0$.
\end{proof}

\section*{Acknowledgments}
The authors are deeply grateful to Arnaud Mortier for their invaluable comments and discussions.
They also express their appreciation to Thomas Fiedler for sharing the information about his 1-cocycles in his book.

\def\cprime{$'$}
\providecommand{\bysame}{\leavevmode\hbox to3em{\hrulefill}\thinspace}
\providecommand{\MR}{\relax\ifhmode\unskip\space\fi MR }
\providecommand{\MRhref}[2]{%
  \href{http://www.ams.org/mathscinet-getitem?mr=#1}{#2}
}
\providecommand{\href}[2]{#2}

\end{document}